\def\a{\alpha}
\def\b{\beta}
\def\R{\mathbb R}
\def\d{\delta}
\def\B{{\cal B}}
\def\E{{\mathbb E}}
\def\P{{\mathbb P}}
\def\labda1{\lambda_1}
\def\labda2{\lambda_2}
\def\m{\mu}
\def\e{\varepsilon}
\def\f{\phi}
\def\t{\tau}
\def\s{\sigma}
\def\comment#1{\relax}
\def\=in{\mathop{\rm =}}
\def\a{\gamma}
\def\b{\beta}
\def\d{\delta}
\def\t{\tau}
\def\m{\mu}
\def\bma{\bm\alpha}
\def\bmb{\bm\beta}
\def\cS{\mathcal{S}}
\def\P{{\mathbb P}}
\def\bmA{\bm A}
\def\bmX{\bm X}
\def\bmx{\bm x}
\def\a{\alpha}
\def\m{\mu}
\def\X{\mathcal{X}}
\definecolor{purple}{rgb}{0.55,0.2,0.90}
\begin{document}

\title*{Profile least squares estimators in the monotone single index model}

\titlerunning{LS estimators in the monotone single index model}

\author{Fadoua Balabdaoui and Piet Groeneboom}
\institute{Fadoua Balabdaoui \at Seminar f\"ur Statistik ETH, Z\"urich, Switzerland,\\ \email{fadoua.balabdaoui@stat.math.ethz.ch},
\and Piet Groeneboom \at Delft University of Technology, DIAM, Mekelweg 4, 2628 CD Delft, The Netherlands,\\
\email{P.Groeneboom@tudelft.nl}}

\maketitle

\abstract*{}

\abstract{We consider least squares estimators of the finite regression parameter $\bma$ in the single index regression model $Y=\psi(\bma^T\bmX)+\e$, where $\bmX$ is a $d$-dimensional random vector, $\E(Y|\bmX)=\psi(\bma^T\bmX)$, and where $\psi$ is monotone.  It has been suggested to estimate $\bma$ by a profile least squares estimator, minimizing $\sum_{i=1}^n(Y_i-\psi(\bma^T\bmX_i))^2$ over monotone $\psi$ and $\bma$ on the boundary $\cS_{d-1}$of the unit ball. Although this suggestion has been around for a long time, it is still unknown whether the estimate is $\sqrt{n}$ convergent.
We show that a profile least squares estimator, using the same pointwise least squares estimator for fixed $\bma$, but using a different global sum of squares, is $\sqrt{n}$-convergent and asymptotically normal. The difference between the corresponding loss functions is studied and also a comparison with other methods is given.}

\section{Introduction}
\label{section:intro}
\setcounter{equation}{0}
The monotone single index model tries to predict a response from the linear combination of a finite number of parameters and a function linking this linear combination to the response via a monotone {\it link function} $\psi_0$ which is  unknown. So, more formally, we have the model
\begin{align*}
Y=\psi_0(\bma_0^T\bmX)+\e,
\end{align*}
where $Y $ is a one-dimensional random variable, $\bm X = (X_1,\ldots, X_d)^T$ is a $d$-dimensional random vector with distribution function $G$,  $\psi_0$ is monotone  and $\e$ is a one-dimensional random variable such that $\E[\e | \bm X] = 0$ $G$-almost surely. For identifiability, the regression parameter $\bma_0$ is a vector  of norm $\|\bma_0\|_2=1$, where $\|\,  \cdot \|_2$ denotes the Euclidean norm in $\R^d$, so $\bma_0\in\cS_{d-1}$, the unit $(d-1)$-dimensional sphere.

The ordinary profile least squares estimate of $\bma_0$ is an $M$-estimate in two senses: for fixed $\bma$ the least squares criterion
\begin{align}
\label{step1}
\psi\mapsto n^{-1}\sum_{i=1}^n\left\{Y_i-\psi(\bma^T\bmX_i)\right\}^2
\end{align}
is minimized for all monotone functions $\psi$ (either decreasing or increasing) which gives an $\bma$ dependent function $\hat\psi_{n,\bma}$, and the function
\begin{align}
\label{step2}
\bma\mapsto n^{-1}\sum_{i=1}^n\left\{Y_i-\hat\psi_{n,\bma}(\bma^T\bmX_i)\right\}^2
\end{align}
is then minimized over $\bma$. This gives a profile least squares estimator $\hat\bma_n$ of $\bma_0$, which we will call LSE in the sequel.
Although this estimate of $\bma_0$ has been  known now for a very long time (more than 30 years probably), it is not known whether it is $\sqrt{n}$ convergent (under appropriate regularity conditions), let alone that we know its asymptotic distribution. Also, simulation studies are rather inconclusive. For example, it is conjectured in \cite{tanaka2008} on the basis of simulations that the rate of convergence of $\hat\bma_n$ is $n^{9/20}$. Other simulation studies, presented in \cite{BDJ:19}, are also inconclusive. In that paper, it was also proved that an ordinary least squares estimator (which ignores that the link function could be non-linear) is $\sqrt{n}$-convergent and asymptotically normal under elliptic symmetry of the distribution of the covariate $\bmX$. Another linear least squares estimator of this type, where the restriction on $\bma$ is $\bma^T\bm S_n\bma=1$, where $S_n$ is the usual estimate of the covariance matrix of the covariates, and where a renormalization at the end is not needed (as it is in the just mentioned linear least squares estimator) was studied in \cite{FGH:19} and there shown to have similar behavior.
If this suggests that the profile LSE should also be $\sqrt n$-consistent,  the extended simulation study in \cite{FGH:19} shows that it is possible to find other estimates which exhibit a better performance in these circumstances.


An alternative way to estimate the regression vector is to minimize the criterion
\begin{align}
\label{SSE_criterion}
\bma\mapsto \left\|n^{-1}\sum_{i=1}^n\left\{Y_i-\hat\psi_{n,\bma}(\bma^T\bmX_i)\right\}\bmX_i\right\|^2
\end{align}
over $\bma\in \cS_{d-1}$, where $\|\cdot\|$ is the Euclidean norm. Note that this is the sum of $d$ squares.  The rational behind minimizing (\ref{SSE_criterion}) is the fact that the true index vector, $\bma_0$, satisfies the (population) score equation
\begin{eqnarray}\label{ps}
\mathbb E \left\{ (Y - \psi_0(\bma_0^T \bmX)) \bmX  \theta(\bma_0^T \bmX)\right \}  =  \mathbf{0}.
\end{eqnarray}
where $\theta$ is any measurable and bounded function.  This clearly follows from the iterative law of expectations and the fact that $\mathbb E\{ Y | \bma_0^T \bmX \}  =  \psi_0(\bma_0^T \bmX)$.  If the function $\theta $ is taken to be the constant $1$, then the goal is to find the minimizer of the Euclidean norm of the empirical counterpart of the above score equation, after replacing the unknown link function, $\psi_0$, by its estimator $\hat \psi_{n, \bma}$. 

We prove in Section \ref{section:analysis_SSE} that this minimization procedure  leads to a $\sqrt{n}$ consistent and asymptotically normal estimator, which is a more precise and informative result compared to what we know now about  the LSE..  Using the well-known properties of isotonic estimators, it is easily seen that the function (\ref{SSE_criterion}) is piecewise constant as a function of $\bma$, with finitely many values, so the minimum exists and is equal to the infimum over $\bma\in \cS_{d-1}$. Notice that this estimator does not use any tuning parameters, just like the LSE.

In \cite{FGH:19}, a similar Simple Score Estimator (SSE) $\hat\bma_n$ was defined as a point $\bma\in\cS_{d-1}$ where all components of the function
\begin{align*}
\bma\mapsto n^{-1}\sum_{i=1}^n\left\{Y_i-\hat\psi_{n,\bma}(\bma^T\bmX_i)\right\}\bmX_i
\end{align*}
cross zero. If the criterion function were continuous in $\bma$, this estimator would have been the same as the least squares estimator, minimizing (\ref{SSE_criterion}), with a minimum equal to zero, but in the present case we cannot assume this because of the discontinuities of the criterion function.

The definition of an estimator as a crossing of the $d$-dimensional vector $\bm0$  makes it necessary to prove the existence of such an estimator, which we found to be a rather non-trivial task. Defining our estimator directly as the minimizer of (\ref{SSE_criterion}), so as a least squares estimator,  relieves us from the duty to prove its existence. Since our estimator has the same limit distribution as  the SSE, we refer to it here under the same name.

A fundamental function in our treatment is the function $\psi_{\bma}$, defined as follows.

\begin{definition}
\label{def_psi_alpha}
{\rm
Let $\cS_{d-1}$ denote again the boundary of the unit ball in $\R^d$. Then, for each $\bma\in \cS_{d-1}$, the function $\psi_{\bma}:\R\to\R$ is defined as the nondecreasing function which minimizes
\begin{align*}
\psi\mapsto\E\{Y-\psi(\bma^T\bmX)\}^2
\end{align*}
over all nondecreasing functions $\psi:\R\to\R$. The existence and uniqueness of the function $\psi_{\bma}$ follows for example from the results in \cite{landers_rogge:81}.

The function $\psi_{\bma}$ coincides in a neighborhood of $\bma_0$ with the ordinary conditional expectation function $\tilde\psi_{\bma}$
\begin{align}
\label{conditional_exp}
\tilde\psi_{\bma}(u)=\E\left\{\psi_0(\bma_0^T\bmX)|\bma^T\bmX=u\right\},\qquad u\in\R,
\end{align}
see \cite{FGH:19}, Proposition 1. The general definition of $\psi_{\bma}$ uses conditioning on a $\sigma$-lattice, and $\psi_{\bma}$ is also called a {\it conditional $2$-mean} (see \cite{landers_rogge:81}).
}
\end{definition}

\vspace{0.3cm}
The importance of the function $\psi_{\bma}$ arises from the fact that we can differentiate this function w.r.t.\ $\bma$, in contrast with the least squares estimate $\hat\psi_{n,\bma}$, and that $\psi_{\bma}$ represents the least squares estimate of $\psi_0$ in the underlying model for fixed $\bma$, if we use $\bma^T\bmx$ as the argument of the monotone link function.

It is also possible to introduce a tuning parameter and use an estimate of $\frac{d}{du}\psi_{\bma}(u)\bigr|_{u=\bma^T\bmX}$. This estimate is defined by:
\begin{align}
\label{estimate_derivative_psi}
\tilde\psi_{n,h,\bm\a}'(u)=\frac1h\int K\left(\frac{u-x}h\right)\,d \hat\psi_{n,\bm\a}(x),
\end{align}
where $K$ is one of the usual kernels, symmetric around zero and with support $[-1,1]$, and where $h$ is a bandwidth of order $n^{-1/7}$ for sample size $n$. For fixed $\bma$, the least squares estimate $\hat\psi_{n,\bma}$ is defined in the same way as above. 
Note that this estimate is rather different from the derivative of a Nadaraya-Watson estimate which is also used in this context and which is in fact the derivative of a ratio of two kernel estimates. If we use the Nadaraya-Watson estimate we need in principle two tuning parameters, one for the estimation of $\psi_0$ and another one for the estimation of the derivative $\psi'_0$.

Using the estimate (\ref{estimate_derivative_psi}) of the derivative we now minimize
\begin{align}
\label{ESE_criterion}
\bma\mapsto \left\|n^{-1}\sum_{i=1}^n\left\{Y_i-\hat\psi_{n,\bma}(\bma^T\bmX_i)\right\}\bmX_i\,\tilde\psi_{n,h,\bm\a}'(\bma^T\bmX_i)\right\|^2
\end{align}
instead of (\ref{SSE_criterion}), where $\|\cdot\|$ is again the Euclidean norm.  The motivation for considering such a minimization problem is very similar to the one given above for the SSE. The only difference now is that the current approach allows us to take the function $\theta$ to be equal to the derivative of $\psi'_0$, which is replaced in the empirical version of the population score in (\ref{ps}) by its estimator $\tilde\psi_{n,h,\bm\a}' $.  A variant of this estimator was defined in \cite{FGH:19} and called the Efficient Score Estimator (ESE) there, since, if the conditional variance $\text{var}(Y|\bmX=\bmx)=\s^2$, where $\s^2$ is independent of the covariate $\bmX$  (the homoscedastic model), the estimate is efficient. As in the case of the simple score estimator (SSE), the estimate was defined as a crossing of zero estimate in \cite{FGH:19} and not as a minimizer of (\ref{ESE_criterion}). But the definition as a minimizer of (\ref{ESE_criterion}) produces an estimator that has the same limit distribution. For reasons of space, we will only give a sketch of the proof of this statement below in Section \ref{sec:ESE_PLSE}.

The qualification ``efficient'' is somewhat dubious, since the estimator is no longer efficient if we do not have homoscedasticy. We give an example of that situation in Section \ref{section:simulations}, where, in fact, the SSE has a smaller asymptotic variance than the ESE.
Nevertheless, to be consistent with our treatment in \cite{FGH:19} we will call the estimate, $\hat\bma_n$, minimizing (\ref{ESE_criterion}), again the ESE. 

Dropping the monotonicity constraint, we can also use as our estimator of the link function a cubic spline $\hat\psi_{n,\bma}$, which is defined as the function minimizing
\begin{align}
\label{spline}
\sum_{i=1}^n\left\{\psi(\bma^T\bmX_i)-Y_i\right\}^2+\mu\int_a^b \psi''(x)^2\,dx,
\end{align}
over the class of functions ${\cal S}_2[a,b]$ of differentiable functions $\psi$ with an absolutely continuous first derivative, where
\begin{align*}
a=\min_i\bma^T\bmX_i,\qquad b=\max_i\bma^T\bmX_i,
\end{align*}
see \cite{green_silverman:94}, pp. 18 and 19, where $\m>0$ is the penalty parameter. Using these estimators of the link function, the estimate $\hat\bma_n$ of $\bma_0$ is then found in \cite{arun_rohit:20} by using a $(d-1)$-dimensional parameterization $\b$ and a transformation $S:\bm\beta\mapsto S(\bmb)=\bma$, where $S(\bmb)$ belongs to the surface of the unit sphere in $\R^d$, and minimizing the criterion
\begin{align*}
\bmb\mapsto\sum_{i=1}^n\{Y_i-\hat\psi_{S(\bmb),\m}(S(\bmb)^T\bmX_i)\}^2,
\end{align*}
over $\bmb$, where $\hat\psi_{S(\bmb),\m}$ minimizes (\ref{spline}) for fixed $\bma=S(\bmb)$.

Analogously to our approach above we can skip the reparameterization, and minimize instead:
\begin{align}
\label{spline_eq}
\left\|\frac1n\sum_{i=1}^n \bigl\{\hat\psi_{n,\bma,\m}(\bma^T\bmX_i)-Y_i\bigr\}\bmX_i\,\tilde\psi_{n,\bm\a,\m}'(u)\bigr|_{u=\bma^T\bmX_i}\right\|
\end{align}
where $\tilde\psi_{n,\bma,\m}$ minimizes (\ref{spline}) for fixed $\bma$ and $\tilde\psi'_{n,\bma,\m}$ is its derivative. We call this estimator the spline estimator.

We finally give simulation results for these different methods in Section \ref{section:simulations}, where, apart from the comparison with the spline estimator, we make a comparison with other estimators of $\bma_0$ not using the monotonicity constraint: the Effective Dimension Reduction (EDR) method, proposed in \cite{hristache01} and implemented in the {\tt R} package {\tt edr}, the (refined) MAVE (Mean Average conditional Variance Estimator) method, discussed in \cite{xia:06}, and implemented in the {\tt R} package {\tt MAVE}, and EFM (Estimation Function Method), discussed in \cite{cui2011}.

For reasons of space, the proofs of the statements of our paper are given in \cite{Fadoua_piet:20}.

\section{General conditions and the functions $\hat\psi_{n,\hat\bma}$  and $\psi_{\hat\bma}$}
\label{section:psi_alpha}
\setcounter{equation}{0}
We  give general conditions that we assume to hold in the remainder of the paper here and give graphical comparisons of the functions $\hat\psi_{n,\bma}$  and $\psi_{\bma}$, where $\psi_{\bma}$ is defined in Definition \ref{def_psi_alpha}.

\vspace{0.3cm}
\begin{example}
\label{example_uniform_covariates}
{\rm
As an illustrative example we take $d=2$, $\psi_0(x)=x^3$, $\bma_0=(1/\sqrt{2},1/\sqrt{2})^T$, $Y_i=\psi_0(\bma_0^TX_i)+\e_i$, where the $\e_i$ are i.i.d.\ standard normal random variables, independent of the $\bmX_i$, which are i.i.d.\ random vectors, consisting of two independent Uniform$(0,1)$ random variables.
In this case the conditional expectation function (\ref{conditional_exp}) is a rather complicated function of $\bma$ which we shall not give here, but can be computed by a computer package such as Mathematica or Maple.
The loss functions:
\begin{align}
\label{loss_functions}
L^{\text{LSE}}:\a_1\mapsto\E\{Y-\psi_{\bma}(\bma^T\bmX)\}^2\qquad\text{and}\qquad
\widehat L^{\text{LSE}}_n:\a_1\mapsto n^{-1}\sum_{i=1}^n\bigl\{Y_i-\hat\psi_{n,\bma}(\bma^T\bmX_i)\bigr\}^2
\end{align}
where the loss function $\widehat L^{\text{LSE}}_n$ is for sample sizes $n=10,000$ and $n=100,000$, and $\bma=(\a_1,\a_2)^T$. For $\a_1\in[0,1]$ and $\a_2$ equal to the positive root $\{1-\a_1^2\}^{1/2}$, we get Figure \ref{fig:empirical_vs_analytic}.
The function $L^{\text{LSE}}$ has a minimum equal to $1$ at $\a_1=1/\sqrt{2}$ and $\widehat L^{\text{LSE}}_n$ has minimum at a value very close to $1/\sqrt{2}$ (furnishing the profile LSE $\hat\bma_n$), which gives a visual evidence for consistency of the profile LSE.
}
\end{example}

\begin{figure}[!h]
	\centering
	\begin{subfigure}{0.45\linewidth}
	\includegraphics[width=0.95\textwidth]{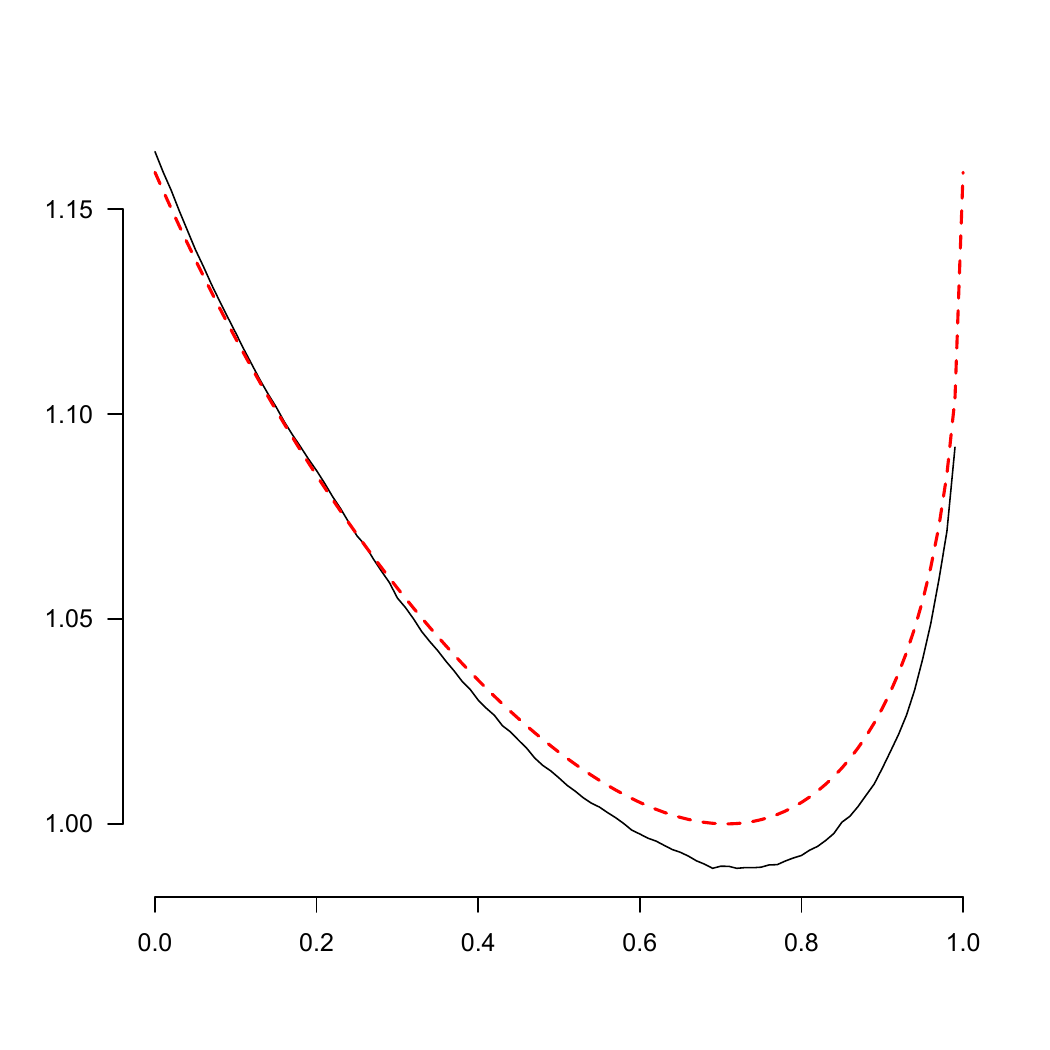}
	\caption{$n = 10^4$}
	\end{subfigure}
	\begin{subfigure}{0.45\linewidth}
	\includegraphics[width=0.95\textwidth]{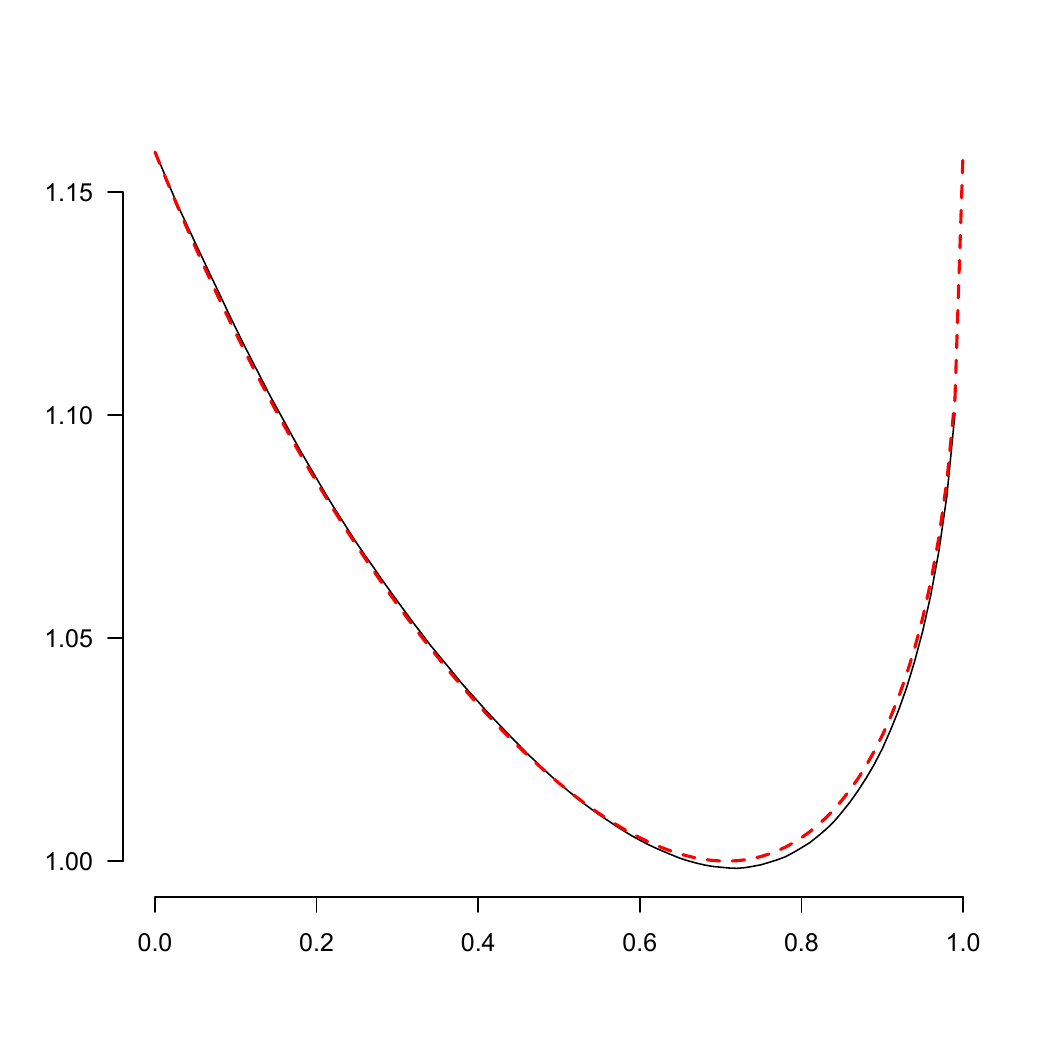}
	\caption{$n =10^5$}
\end{subfigure}\\
\caption{The loss functions $L^{\text{LSE}}$ (red, dashed) and $\widehat L^{\text{LSE}}_n$ (solid), where $n=10^4$ and $n=10^5$.}
\label{fig:empirical_vs_analytic}
	\end{figure}

In order to show the $\sqrt{n}$- consistency and asymptotic normality of the estimators in the next sections, we now introduce some conditions, which correspond to those in \cite{FGH:19}. We note that we do not need conditions on 
re parameterization. 

\begin{enumerate}
\item[(A1)] $\bmX$ has a density w.r.t.\ Lebesgue measure on its support $\X$, which is a convex  set $\mathcal{X}$ with a nonempty interior,
and satisfies $\mathcal{X}\subset\{\bmx\in\R^d:\|\bmx\|\le R\}$ for some $R>0$.
\item[(A2)] The function $\psi_0$ is bounded on the set $\{u\in\R:u=\bma_0^T\bmx,\,\bmx\in \mathcal{X}\}$.
\item[(A3)] There exists $\d>0$ such that the conditional expectation $\tilde\psi_{\bma}$, defined by (\ref{conditional_exp}) is nondecreasing on $I_{\bma}=\{u\in\R: u=\bma^T\bmx,\,x\in\mathcal{X}\}$ and satisfies $\tilde\psi_{\bma}=\psi_{\bma}$, so minimizes
\begin{align*}
\left\|\E\left\{Y-\psi(\bma^T\bmX)\right\}\bmX\right\|^2,
\end{align*}
over nondecreasing functions $\psi$, if $\|\bma-\bma_0\|\le\d$.
\item[(A4)] Let $a_0$ and $b_0$ be the (finite) infimum and supremum of the interval $\{\bma_0^T\bmx,\,\bmx\in\mathcal{X}\}$. Then $\psi_0$ is continuously differentiable on $(a_0-\d R,a_0+\d R)$, where $R$ and $\d$ are as in Assumption A1 and A3.
\item[(A5)] The density $g$ of $\bmX$ is differentiable and there exist strictly positive constants $c_1$ to $c_4$ such that $c_1\le g(\bmx)\le c_2$ and $c_3\le\frac{\partial}{\partial x_i}g(\bmx)\le c_4$ for $\bmx$ in the interior of $\X$.
\item[(A6)] There exists a $c_0>0$ and $M>0$ such that $\E\{|Y|^m|\bmX=\bmx\}\le m!M_0^{m-2}c_0$ for all integers $m\ge2$ and $\bmx\in\X$ almost surely w.r.t.\ $dG$.
\end{enumerate}

These conditions are rather natural, and are discussed in \cite{FGH:19}. The following lemma shows that, for the asymptotic distribution of $\hat\bma_n$, we can reduce the derivation to the analysis of $\psi_{\hat\bma_n}$. We have the following result (Proposition 4 in \cite{FGH:19}) on the distance between $\hat\psi_{n,\hat\bma}$  and $\psi_{\hat\bma}$.

\begin{lemma}
\label{lemma:closeness_psi_n_alpha_psi_alpha}
Let conditions (A1) to (A6) be satisfied and let $G$ be the distribution function of $\bmX$. Then we have, for $\bma$ in a neighborhood $\B(\bma_0,\d)$ of $\bma_0$:
\begin{align*}
	\sup_{\bma \in \B(\bma_0,\d)} \int\left\{\hat \psi_{n\bma}(\bma^T\bm x)-\psi_{\bm\a}(\bma^T\bm x)\right\}^2\,dG(\bm x)=O_p\left((\log n) ^2n^{-2/3}\right).	\end{align*}	
\end{lemma}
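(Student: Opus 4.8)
The plan is to uniformly control the distance between the isotonic least squares estimator $\hat\psi_{n,\bma}$ and its population analogue $\psi_{\bma}$ by a bracketing/entropy argument over the product class indexed by $\bma\in\B(\bma_0,\d)$. First I would fix a grid of directions: since $\cS_{d-1}$ is compact, cover $\B(\bma_0,\d)$ by $O(\ee^{-(d-1)})$ balls of radius $\ee$ with centers $\bma_1,\dots,\bma_N$, and use the Lipschitz dependence of $u\mapsto\bma^T\bmx$ on $\bma$ (with $\|\bmx\|\le R$ by (A1)) to reduce the supremum over $\bma$ to a maximum over the grid plus a remainder of order $\ee$ times a modulus-of-continuity term; choosing $\ee$ polynomially small in $n$ (e.g. $\ee\asymp n^{-1}$) makes that remainder negligible. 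The core is then, for each fixed $\bma=\bma_j$, the bound
\begin{align*}
\int\left\{\hat\psi_{n,\bma}(\bma^T\bmx)-\psi_{\bma}(\bma^T\bmx)\right\}^2\,dG(\bmx)=O_p\bigl((\log n)^2 n^{-2/3}\bigr),
\end{align*}
which is the classical $n^{-1/3}$-rate for isotonic regression in $L_2(G)$. This is essentially Proposition 4 of \cite{FGH:19}, cited just before the statement, so I would quote it for fixed $\bma$ and only do the work of making the bound uniform.

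For the fixed-$\bma$ rate I would use the standard machinery for monotone M-estimation. The estimator $\hat\psi_{n,\bma}$ minimizes the empirical criterion $\psi\mapsto n^{-1}\sum_i\{Y_i-\psi(\bma^T\bmX_i)\}^2$ over monotone $\psi$, while $\psi_{\bma}$ minimizes the population version; by (A3) the population minimizer is the conditional expectation $\tilde\psi_{\bma}$ and is monotone on $I_{\bma}$. Writing $Y_i=\psi_0(\bma_0^T\bmX_i)+\e_i$ and using $\E[\e|\bmX]=0$, the "basic inequality" from comparing the criterion at $\hat\psi_{n,\bma}$ and at $\psi_{\bma}$ yields
\begin{align*}
\int\left(\hat\psi_{n,\bma}-\psi_{\bma}\right)^2\,dG\le 2\,\P_n\bigl[(\hat\psi_{n,\bma}-\psi_{\bma})\,\e\bigr]+(\text{empirical-process remainder}),
\end{align*}
and one controls the right-hand side by a peeling argument: on the shell where $\|\hat\psi_{n,\bma}-\psi_{\bma}\|_{L_2(G)}\asymp 2^{-k}$, the relevant empirical process is indexed by (a bounded piece of) the class of monotone functions, whose bracketing entropy is $\log N_{[\,]}(\ee,\mathcal{M},L_2)\lesssim \ee^{-1}$. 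Applying a maximal inequality (van de Geer / Massart), using the envelope and moment bound (A6) to handle the multiplicative noise $\e$, and balancing gives the modulus $\phi_n(\tau)\asymp \sqrt{\tau}$, hence the rate $n^{-1/3}$ up to logarithmic factors; the $(\log n)^2$ absorbs the need for a uniform-in-$\bma$ deviation bound and the unboundedness of $\e$ truncated at level $\log n$ (Assumption (A6) makes the tail contribution beyond $|\e|>C\log n$ summable).

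The main obstacle, and where I would spend the most care, is making the empirical-process bound genuinely \emph{uniform} over $\bma\in\B(\bma_0,\d)$ rather than just pointwise. Two issues combine: (i) the envelope/entropy constants must not blow up as $\bma$ varies — here (A5) (density bounded above and below on $\X$) and compactness of $\X$ keep the $L_2(G)$ geometry comparable to $L_2$ of the pushforward of $\bma^T\bmX$ uniformly in $\bma$, and (A4) gives uniform boundedness and Lipschitz control of $\psi_0$ near $\bma_0^T\bmx$; (ii) the discretization in $\bma$ must be fine enough that the monotone-class empirical processes at neighboring directions differ negligibly — this is where the grid of size $N\asymp\ee^{-(d-1)}$ enters, contributing a $\log N\asymp (d-1)\log(1/\ee)\asymp\log n$ factor to the maximal inequality, which is the second power of $\log n$ in the stated rate. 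Once the entropy integral is bounded uniformly, a union bound over the grid plus the Lipschitz remainder closes the argument, yielding the claimed $O_p((\log n)^2 n^{-2/3})$ bound on the supremum.
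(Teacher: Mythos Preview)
The paper does not supply its own proof of this lemma: immediately before the statement it says ``We have the following result (Proposition 4 in \cite{FGH:19})'' and then states the lemma without any argument, deferring entirely to that reference. So there is no in-paper proof to compare against; the lemma is imported wholesale, \emph{including} the uniformity over $\bma\in\B(\bma_0,\d)$.

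Your sketch is a reasonable outline of how such a result is typically established, and is in the spirit of the argument one finds in the supplementary material of \cite{FGH:19}: the basic inequality for the least-squares isotonic estimator, the bracketing entropy $\log N_{[\,]}(\ee,\mathcal{M},L_2)\lesssim \ee^{-1}$ for bounded monotone functions, a maximal/peeling inequality to extract the $n^{-1/3}$ rate, truncation of $\e$ at a level $\asymp\log n$ using the sub-exponential moment bound in (A6), and then a uniformization over $\bma$. One small correction: you write that Proposition 4 of \cite{FGH:19} gives the bound ``for fixed $\bma$'' and that you would then add the uniformity yourself; in fact the cited proposition already contains the supremum over $\bma\in\B(\bma_0,\d)$, so in the paper's logic there is nothing left to do. If you nonetheless want to reconstruct the uniform step, note that rather than a finite grid plus Lipschitz interpolation (which is delicate because $\hat\psi_{n,\bma}$ is a step function and not obviously Lipschitz in $\bma$), the cleaner route is to bound directly the bracketing entropy of the combined class $\{(\bmx,y)\mapsto f(\bma^T\bmx):\ f\ \text{monotone, bounded},\ \bma\in\B(\bma_0,\d)\}$, which is what the supplement of \cite{FGH:19} effectively does; the extra $\bma$-dimension contributes only a logarithmic factor to the entropy integral and hence to the rate.
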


\section{The limit theory for the SSE}
\label{section:analysis_SSE}
\setcounter{equation}{0}
In this section we derive the limit distribution of the SSE introduced above. 
In our derivation, the function $\psi_{\bma}$ of Definition \ref{def_psi_alpha} plays a crucial role.  Below, we will use the following assumptions, additionally to (A1) to (A6).

\begin{enumerate}
\item[(A7)] There exists a $\d>0$ such that for all $\bma\in(\B(\bma_0,\d)\cap\cS_{d-1})\setminus\{\bma_0\}$ the random variable
\begin{align*}
\text{cov}\left((\bma_0-\bma)^T\bmX,\psi_0(\bma_0^T\bmX)\bigm|\bma^T\bmX\right)
\end{align*}
is not equal to $0$ almost surely.
\item[(A8)] The matrix
\begin{align*}
\E\left[\psi_0'(\bma_0^T\bmX)\,\text{cov}(\bmX|\bma_0^T\bmX)\right]
\end{align*}
has rank $d-1$.
\end{enumerate}

We start by comparing (\ref{SSE_criterion}) with the function
\begin{align}
\label{SSE_criterion_theoretical}
\bma\mapsto\left\|\E\left\{Y-\psi_{\bma}(\bma^T\bmX)\right\}\bmX\right\|^2.
\end{align}
As in Section \ref{section:intro}, the function $\hat\psi_{n,\bma}$ is just the (isotonic) least squares estimate for fixed $\bma$.

\vspace{0.3cm}
\begin{example}[Continuation of Example \ref{example_uniform_covariates}]
\label{example_uniform_covariates2}
{\rm We consider the loss function given by
\begin{align}
\label{loss_functions2a}
L^{\text{SSE}}:\a_1\mapsto\left\|\E\left\{Y-\psi_{\bma}(\bma^T\bmX)\right\}\bmX\right\|^2,
\end{align}
and compare this with the loss function
\begin{align}
\label{loss_functions2b}
\widehat L^{\text{SSE}}_n:\a_1\mapsto \left\|n^{-1}\sum_{i=1}^n\left\{Y_i-\hat\psi_{n,\bma}(\bma^T\bmX_i)\right\}\bmX_i\right\|^2,
\end{align}
for the same data as in Example \ref{example_uniform_covariates} in Section \ref{section:psi_alpha}.
If we plot the loss functions $L^{\text{SSE}}$ and $\widehat L^{\text{SSE}}_n$ for the model of Example  \ref{example_uniform_covariates},  where $\bma=(\a_1,\a_2)^T$, for $\a_1\in[0,1]$ and $\a_2$ the positive root $\sqrt{1-\a_1^2}$, we get Figure \ref{fig:empirical_vs_analytic_SSE}. The function $L^{\text{LSE}}$ has a minimum equal to $0$ at $\a_1=1/\sqrt{2}$ while $\widehat L^{\text{SSE}}_n$ attains its minimum at a value that is very close to $1/\sqrt 2$.  

\begin{figure}[!h]
	\centering
	\begin{subfigure}{0.45\linewidth}
	\includegraphics[width=0.95\textwidth]{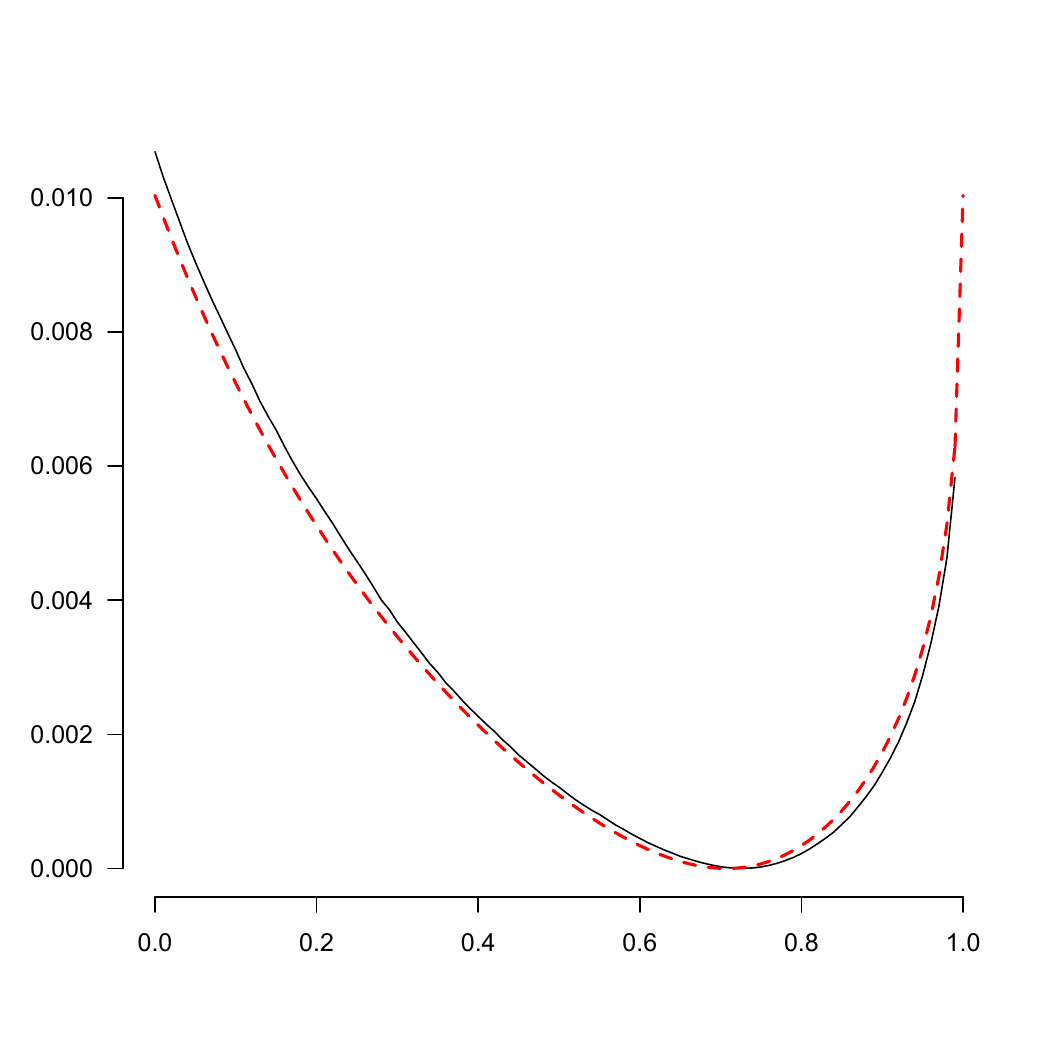}
	\caption{$n = 10^4$}
	\end{subfigure}
	\begin{subfigure}{0.45\linewidth}
	\includegraphics[width=0.95\textwidth]{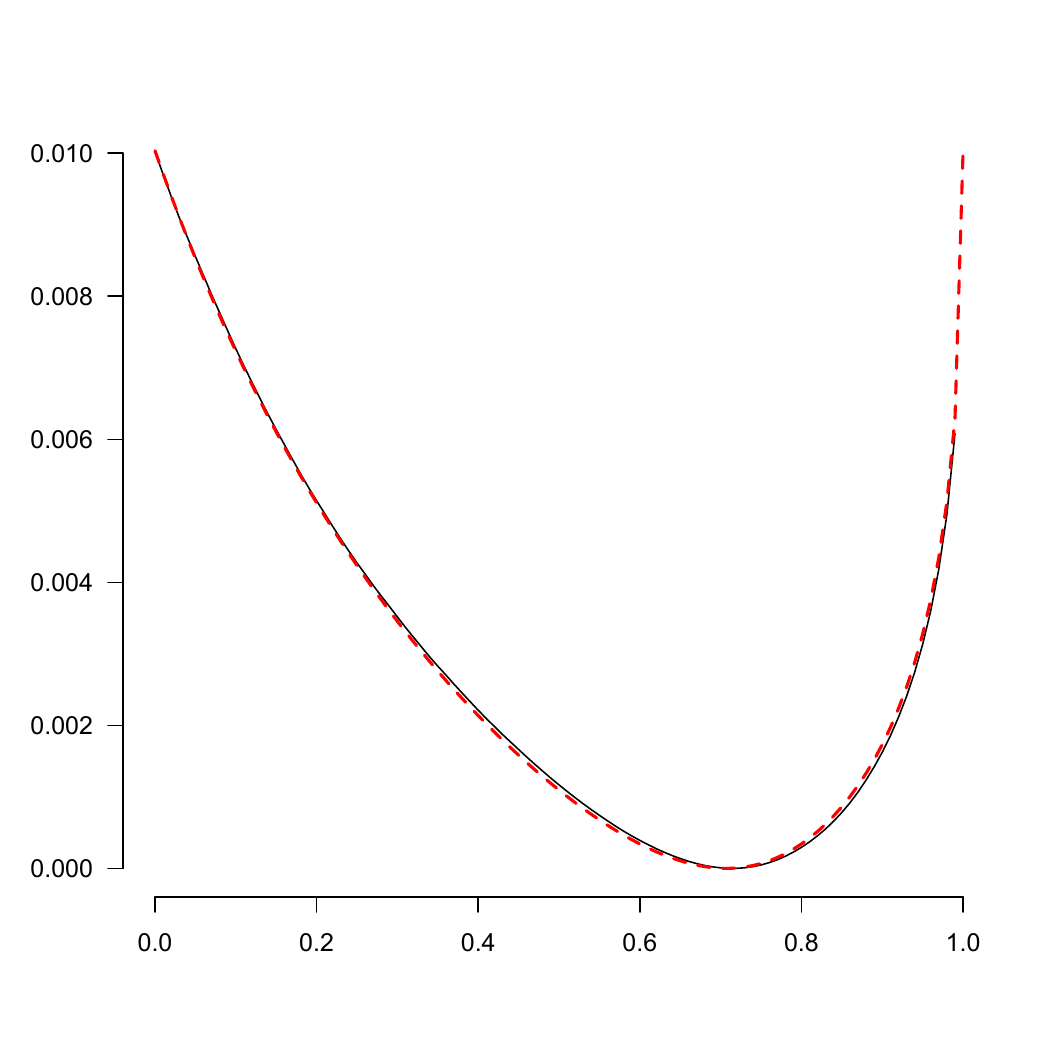}
	\caption{$n =10^5$}
\end{subfigure}\\
\caption{The loss functions $L^{\text{SSE}}$ (red, dashed) and $\widehat L^{\text{SSE}}_n$ (solid), where $n=10^4$ and $n=10^5$.}
\label{fig:empirical_vs_analytic_SSE}
	\end{figure}

In general, the curve $\widehat L^{\text{SSE}}_n$ will be smoother than the curve $\widehat L^{\text{LSE}}_n$. The rather striking difference in smoothness of the loss functions $\widehat L^{\text{LSE}}_n$ and $\widehat L^{\text{SSE}}_n$ can be seen in Figure \ref{fig:loss_on_0.65_0.8}, where we zoom in on the interval $[0.65,0.80]$ for $n=10,000$ and the examples of Figure \ref{fig:empirical_vs_analytic} and Figure \ref{fig:empirical_vs_analytic_SSE}.  The question is whether this difference in smoothness explains why the SSE is $\sqrt n$-consistent while this might not be the case for the profile LSE.}
\end{example}

\begin{figure}[!h]
	\centering
	\begin{subfigure}{0.45\linewidth}
	\includegraphics[width=0.95\textwidth]{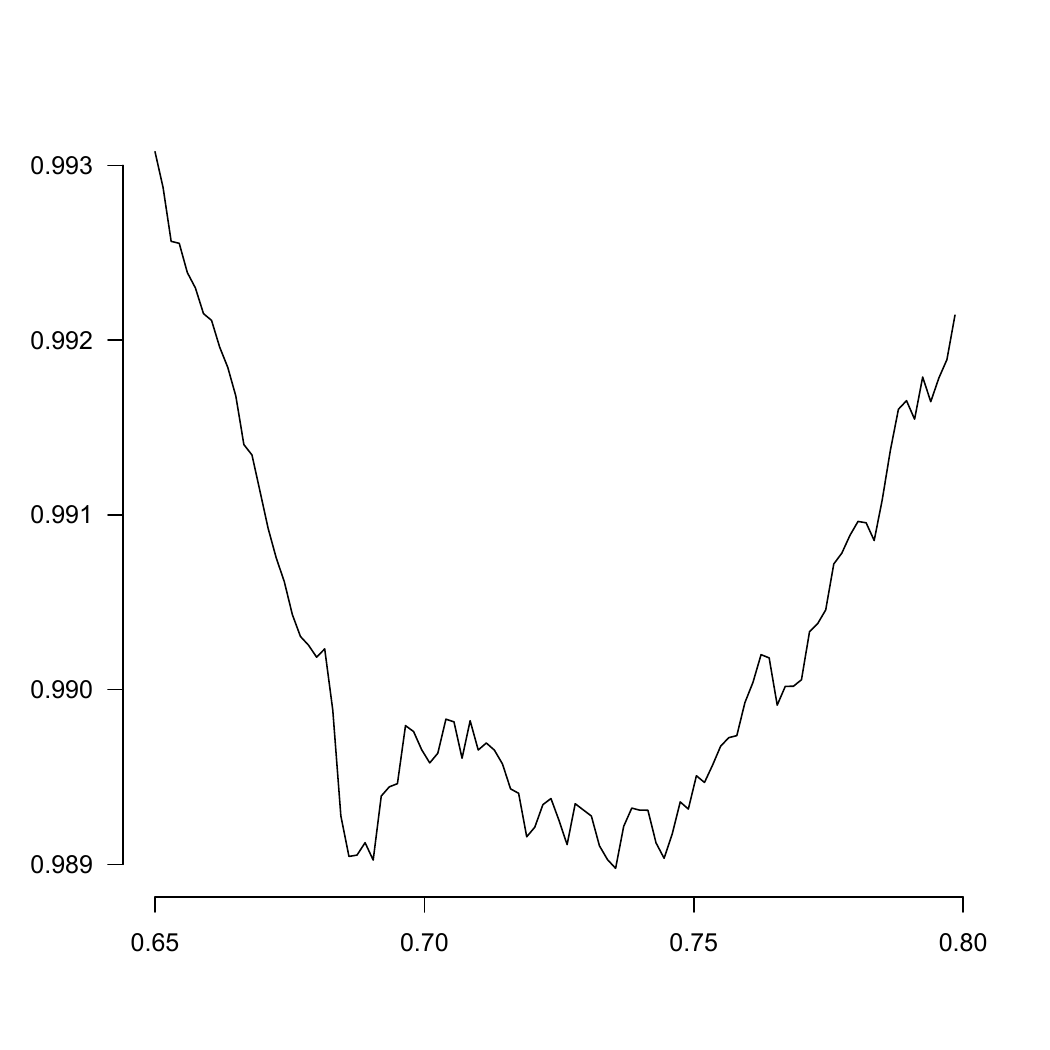}
	\caption{$\widehat L^{\text{LSE}}_n$}
	\end{subfigure}
	\begin{subfigure}{0.45\linewidth}
	\includegraphics[width=0.95\textwidth]{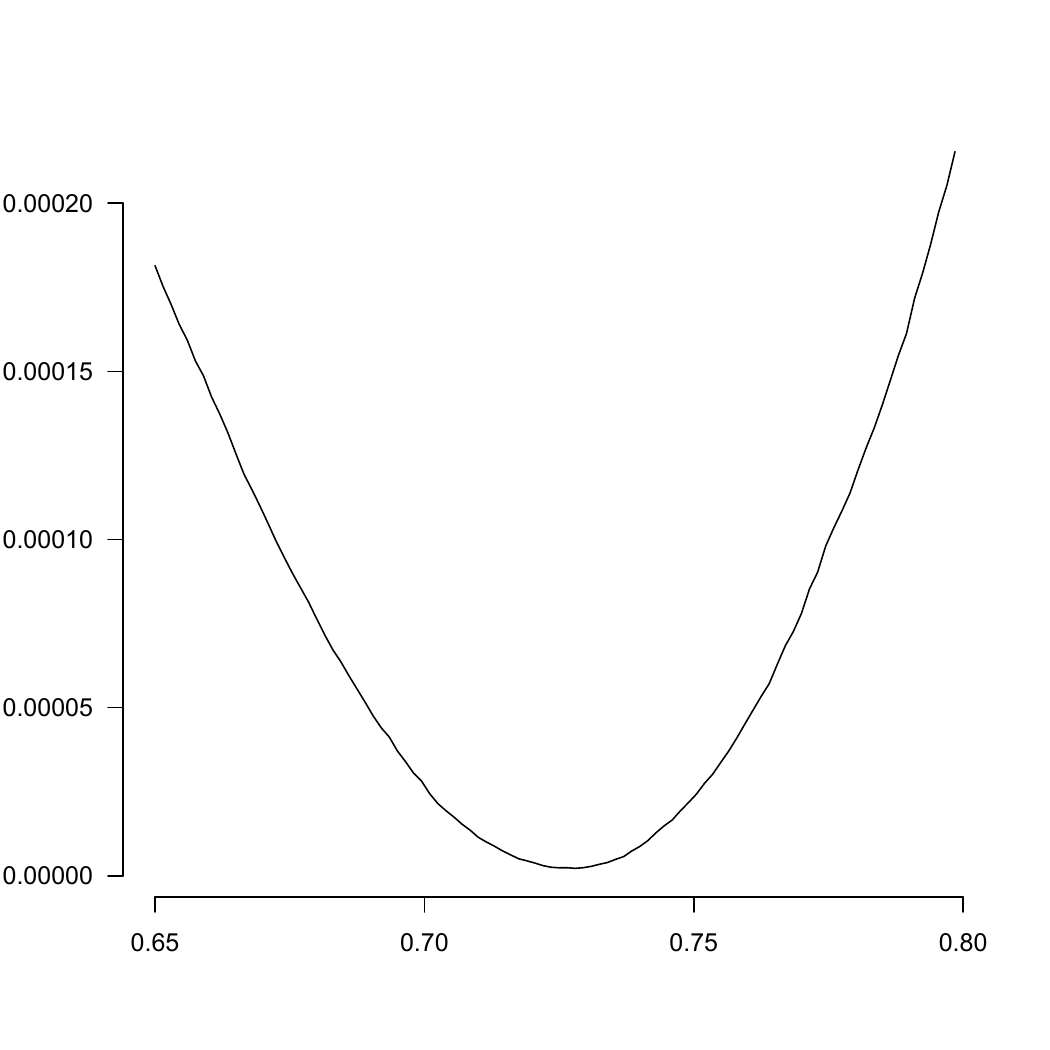}
	\caption{$\widehat L^{\text{SSE}}_n$}
\end{subfigure}\\
\caption{The loss functions $\widehat L^{\text{LSE}}_n$ and $\widehat L^{\text{SSE}}_n$ on $[0.65,0.80]$, for $n=10^4$.}
\label{fig:loss_on_0.65_0.8}
	\end{figure}

\vspace{0.3cm}
In the computation of the SSE, we have to take a starting point. For this we use the LSE, which is proved to be consistent in \cite{BDJ:19}.
The proof of the consistency of the SSE is a variation on the proof for corresponding crossing of zero estimator in \cite{FGH:19} in (D.2) of the supplementary material. We use the following lemma, which is a corollary to Proposition 2 in the supplementary material of \cite{FGH:19}.

\begin{lemma}
\label{lemma:phi_n-phi}
Let $\f_n$ and $\f$ be defined by
\begin{align*}
\f_n(\bma)=\int\bmx\left\{y-\hat\psi_{n,\bma}(\bma^T\bmx)\right\}\,d\P_n(\bmx,y),
\end{align*}
and
\begin{align*}
\f(\bma)=\int\bmx\left\{y-\psi_{\bma}(\bma^T\bmx)\right\}\,dP(\bmx,y).
\end{align*}
Then, uniformly for $\bma$ in a neighborhood $\B(\bma_0,\d)\cap\cS_{d-1}$ of $\bma_0$:
\begin{align*}
\f_n(\bma)=\f(\bma)+o_p(1).
\end{align*}
\end{lemma}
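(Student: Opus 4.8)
\emph{Proof plan.} The plan is to split $\f_n(\bma)-\f(\bma)$ into an empirical-process part and a ``bias'' part and to control each uniformly over $\B(\bma_0,\d)\cap\cS_{d-1}$. Let $P$ denote the true joint law of $(\bmX,Y)$ and $G$ its $\bmX$-marginal. Adding and subtracting $\int\bmx\{y-\hat\psi_{n,\bma}(\bma^T\bmx)\}\,dP(\bmx,y)$ gives
\begin{align*}
\f_n(\bma)-\f(\bma)&=\int\bmx\bigl\{y-\hat\psi_{n,\bma}(\bma^T\bmx)\bigr\}\,d(\P_n-P)(\bmx,y)\\
&\qquad+\int\bmx\bigl\{\psi_{\bma}(\bma^T\bmx)-\hat\psi_{n,\bma}(\bma^T\bmx)\bigr\}\,dP(\bmx,y)=:A_n(\bma)+B_n(\bma).
\end{align*}
The bias part is the easy one: by Cauchy--Schwarz and (A1) (so $\|\bmx\|\le R$ on $\X$),
\begin{align*}
\|B_n(\bma)\|\le R\left(\int\bigl\{\hat\psi_{n,\bma}(\bma^T\bmx)-\psi_{\bma}(\bma^T\bmx)\bigr\}^2\,dG(\bmx)\right)^{1/2},
\end{align*}
and Lemma \ref{lemma:closeness_psi_n_alpha_psi_alpha} bounds the right-hand side by $O_p\bigl((\log n)n^{-1/3}\bigr)$ uniformly in $\bma$; hence $\sup_{\bma}\|B_n(\bma)\|=o_p(1)$.

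For the empirical-process part I would separate the two summands in $y-\hat\psi_{n,\bma}(\bma^T\bmx)$. The contribution of $y$, namely $\int\bmx\,y\,d(\P_n-P)=n^{-1}\sum_{i=1}^n\bmX_iY_i-\E[\bmX Y]$, carries no $\bma$-dependence and tends to $0$ a.s.\ by the strong law, since $\E\|\bmX Y\|\le R\,\E|Y|<\infty$ by (A1) and (A6). It remains to bound $\sup_{\bma}\bigl\|\int\bmx\,\hat\psi_{n,\bma}(\bma^T\bmx)\,d(\P_n-P)\bigr\|$. The key observations are that for each $\bma$ the isotonic estimate $\hat\psi_{n,\bma}$ is nondecreasing with range contained in $[\min_iY_i,\max_iY_i]$, and that (A6) forces $\max_i|Y_i|=O_p(\log n)$; so, on an event of probability tending to $1$, all the functions $\hat\psi_{n,\bma}$ lie in the class $\mathcal M_{B_n}$ of nondecreasing functions on $[-R,R]\supseteq I_{\bma}$ bounded in absolute value by $B_n:=M\log n$ for a suitable constant $M$. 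On that event the remaining supremum is dominated by $\sup\bigl\{\bigl|(\P_n-P)f\bigr|:f\in\mathcal F_n\bigr\}$, where $\mathcal F_n=\{(\bmx,y)\mapsto x_j\,\psi(\bma^T\bmx):1\le j\le d,\ \bma\in\B(\bma_0,\d)\cap\cS_{d-1},\ \psi\in\mathcal M_{B_n}\}$ is a class of functions uniformly bounded by $RB_n$. I would control this Glivenko--Cantelli-type quantity by bracketing: for fixed $\bma$ the monotone functions with values in $[-B,B]$ have $L_2(P)$ bracketing log-entropy of order $B/\ee$, while covering the $(d-1)$-dimensional sphere-neighborhood by a net and using that $\bmx\mapsto\bma^T\bmx$ is Lipschitz in $\bma$ on the bounded set $\X$ (together with boundedness of the density of $\bma^T\bmX$, a consequence of (A1) and (A5)) and that $|x_j|\le R$, contributes only a further term of order $\log(B/\ee)$; Dudley's maximal inequality then gives an expected supremum of order $B_n/\sqrt n=O\bigl((\log n)/\sqrt n\bigr)=o(1)$. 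Combining the estimates for $A_n$ and $B_n$ gives the lemma.

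I expect the empirical-process step to be the main obstacle: one has to make rigorous that the class $\mathcal F_n$, indexed jointly by the infinite-dimensional monotone function $\psi$ and the finite-dimensional direction $\bma$, is Glivenko--Cantelli, and to handle the mild nuisance that the natural envelope $B_n=M\log n$ grows with $n$. A convenient alternative is to truncate each $Y_i$ at level $M\log n$ at the outset and to check, using the moment bound (A6), that this alters $\f_n$ by only $o_p(1)$ uniformly in $\bma$. This is exactly what is done in Proposition 2 of the supplementary material of \cite{FGH:19}, of which the present lemma is a corollary.
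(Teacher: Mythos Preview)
The paper does not actually supply a proof here: it merely states that the lemma is a corollary to Proposition~2 in the supplementary material of \cite{FGH:19} (with the added remark that the reparameterization used there is unnecessary). Your decomposition into the bias term $B_n(\bma)$, handled via Cauchy--Schwarz and Lemma~\ref{lemma:closeness_psi_n_alpha_psi_alpha}, and the empirical-process term $A_n(\bma)$, handled by a Glivenko--Cantelli argument over monotone bounded functions composed with $\bma^T\bmx$, is exactly the route one expects that cited proposition to take, and your closing sentence acknowledges this; the sketch is sound and the identified obstacle (joint indexing by $\psi$ and $\bma$, plus the logarithmically growing envelope from (A6)) is indeed the only real work.
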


\begin{remark}
{\rm
The proof in \cite{FGH:19} used reparameterization, but this is actually not needed in the proof.
}
\end{remark}

\begin{theorem}[Consistency of the SSE]
Let $\hat\bma_n\in\cS_{d-1}$ be the SSE of $\bma_0$ and let conditions (A1) to (A8) be satisfied. Then
\begin{align*}
\hat\bma_n\stackrel{p}\longrightarrow\bma_0.
\end{align*}
\end{theorem}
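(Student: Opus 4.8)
The plan is to combine the standard $M$-estimation consistency machinery with the two lemmas already at our disposal: Lemma \ref{lemma:phi_n-phi}, which gives uniform closeness of the empirical criterion map $\f_n$ to the population map $\f$ on a neighborhood $\B(\bma_0,\d)\cap\cS_{d-1}$, and Lemma \ref{lemma:closeness_psi_n_alpha_psi_alpha}, which controls $\hat\psi_{n,\bma}-\psi_{\bma}$ uniformly. First I would observe that the SSE minimizes $\bma\mapsto\|\f_n(\bma)\|^2$ over $\cS_{d-1}$, and that, since the LSE is consistent (by \cite{BDJ:19}) and we start the minimization there, it suffices to argue on the neighborhood $\B(\bma_0,\d)\cap\cS_{d-1}$; more precisely, one shows that with probability tending to one the minimizer of $\|\f_n(\bma)\|^2$ over the whole sphere lies in this neighborhood, because outside it the population criterion $\|\f(\bma)\|^2$ is bounded away from zero (this is exactly where (A7) enters) while at $\bma_0$ it equals zero, and $\f_n$ is uniformly close to $\f$.

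The core of the argument is then a standard Wald/Van der Vaart--Wellner well-separated-minimum argument. The key steps, in order, are: (i) identify $\bma_0$ as the unique zero of $\f$ on $\B(\bma_0,\d)\cap\cS_{d-1}$, so that $\|\f(\bma)\|^2$ has a well-separated minimum at $\bma_0$ — this uses (A3), which guarantees $\psi_{\bma}=\tilde\psi_{\bma}$ is the conditional expectation on the neighborhood, so that $\f(\bma)=\E\{(\bma_0-\bma)^T\bmX\,\psi_0(\bma_0^T\bmX)\cdot(\text{something})\}$ can be written via the tower property as an expression involving the conditional covariance in (A7), which is nonzero a.s.\ for $\bma\neq\bma_0$; (ii) invoke Lemma \ref{lemma:phi_n-phi} to get $\sup_{\bma}\|\f_n(\bma)-\f(\bma)\|=o_p(1)$, hence $\sup_\bma\bigl|\,\|\f_n(\bma)\|^2-\|\f(\bma)\|^2\,\bigr|=o_p(1)$ on the neighborhood (using boundedness of $\f,\f_n$, which follows from (A1), (A2), (A6)); (iii) since $\|\f_n(\hat\bma_n)\|^2\le\|\f_n(\bma_0)\|^2=o_p(1)$, conclude $\|\f(\hat\bma_n)\|^2=o_p(1)$, and then the well-separatedness from (i) forces $\hat\bma_n\to_p\bma_0$. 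Lemma \ref{lemma:closeness_psi_n_alpha_psi_alpha} is what underlies the uniform closeness in step (ii), so it is really being used through Lemma \ref{lemma:phi_n-phi}.

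The main obstacle I anticipate is step (i): verifying that (A7) genuinely delivers a \emph{well-separated} minimum, i.e.\ that $\inf\{\|\f(\bma)\|^2:\|\bma-\bma_0\|\ge\eta,\ \bma\in\cS_{d-1}\}>0$ for every $\eta>0$, rather than merely $\f(\bma)\neq0$ pointwise. This requires a continuity/compactness argument: one shows $\bma\mapsto\f(\bma)$ is continuous on $\cS_{d-1}$ (the population version is smooth, unlike the empirical one), the sphere minus an $\eta$-ball around $\bma_0$ is compact, and $\f$ has no zero on it by (A7) together with the consistency of the LSE handling the part far from $\bma_0$. A secondary technical point is justifying that one may restrict attention to $\B(\bma_0,\d)\cap\cS_{d-1}$ in the first place; this is handled by the consistency of the LSE starting value plus the observation that the SSE criterion, being piecewise constant with finitely many values, attains its infimum, so the minimizing $\bma$ is well defined and, with probability tending to one, close to $\bma_0$. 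Everything else is routine bookkeeping with the moment and boundedness conditions (A1)--(A6).
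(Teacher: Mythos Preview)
Your proposal is correct and follows essentially the same route as the paper: uniform closeness $\f_n\to\f$ from Lemma~\ref{lemma:phi_n-phi}, then $\|\f_n(\hat\bma_n)\|^2\le\|\f_n(\bma_0)\|^2=o_p(1)$ together with (A7) forces the conclusion; the paper phrases the final step as a subsequence argument (any limit point $\bma_*$ satisfies $\f(\bma_*)=0$ by continuity, hence $\bma_*=\bma_0$) rather than as well-separatedness, but the two are equivalent on the compact neighborhood. The one place you are vague is exactly where the paper is explicit: to invoke (A7) one does not look at the vector $\f(\bma_*)$ itself but at the scalar projection
\[
(\bma_0-\bma_*)^T\f(\bma_*)=\E\bigl[(\bma_0-\bma_*)^T\bmX\{\psi_0(\bma_0^T\bmX)-\psi_{\bma_*}(\bma_*^T\bmX)\}\bigr]=\E\bigl[\text{cov}\bigl((\bma_0-\bma_*)^T\bmX,\psi_0(\bma_0^T\bmX)\bigm|\bma_*^T\bmX\bigr)\bigr],
\]
the last equality by (A3) and the tower property; your formula ``$\f(\bma)=\E\{(\bma_0-\bma)^T\bmX\,\psi_0(\bma_0^T\bmX)\cdot(\text{something})\}$'' conflates the vector $\f$ with this projection, but the underlying idea is the right one.
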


\begin{lemma}
\label{lemma:transition_psi_n-psi_alpha}
Let $\hat\bma_n\in\cS_{d-1}$ be a minimizer of
\begin{align}
\label{crossing_of_zero}
\left\|n^{-1}\sum_{i=1}^n\left\{Y_i-\hat\psi_{n,\bma}(\bma^T\bmX_i)\right\}\bmX_i\right\|^2,
\end{align}
for $\bma\in\cS_{d-1}$, where $\|\cdot\|$ denotes the Euclidean norm. Then, under conditions (A1) to (A8) we have:
\begin{align}
\label{SSE_root_psi_alpha}
n^{-1}\sum_{i=1}^n\left\{Y_i-\hat\psi_{n,\hat\bma_n}(\hat\bma_n^T\bmX_i)\right\}\bmX_i
=n^{-1}\sum_{i=1}^n\left\{Y_i-\psi_{\hat\bma_n}(\hat\bma_n^T\bmX_i)\right\}\left\{\bmX_i-\E\left(\bmX|\hat\bma_n^T\bmX_i\right)\right\}+o_p\left(n^{-1/2}\right).
\end{align}
\end{lemma}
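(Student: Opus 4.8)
The plan is to use the characterization of the isotonic least squares estimate to replace $\bmX_i$ by a ``centred'' covariate, and then to control the resulting approximation errors by combining the $L_2$-rate of Lemma~\ref{lemma:closeness_psi_n_alpha_psi_alpha} with entropy bounds for monotone and bounded-variation functions. By the consistency of $\hat\bma_n$ established just above we may work on the event $\{\hat\bma_n\in\B(\bma_0,\d)\cap\cS_{d-1}\}$, on which $\psi_{\hat\bma_n}=\tilde\psi_{\hat\bma_n}$ and Lemma~\ref{lemma:closeness_psi_n_alpha_psi_alpha} applies. Write $\bma=\hat\bma_n$, $\hat\psi=\hat\psi_{n,\bma}$, $\psi=\psi_{\bma}$ and $\bar\bmX_{\bma}(u)=\E(\bmX\mid\bma^T\bmX=u)$; by (A5) the map $u\mapsto\bar\bmX_{\bma}(u)$ is Lipschitz on $I_{\bma}$, uniformly in $\bma$. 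Let $\bar\bmX_{\bma}^{(n)}$ be the function on $I_{\bma}$ that is constant on each block (level set) of $\hat\psi$ and takes there the value of $\bar\bmX_{\bma}$ at the left endpoint of the block. Since the residuals $Y_i-\hat\psi(\bma^T\bmX_i)$ sum to zero over each block of the isotonic estimate, one has $n^{-1}\sum_i\{Y_i-\hat\psi(\bma^T\bmX_i)\}h(\bma^T\bmX_i)=0$ for every block-constant $h$, so
\begin{align*}
n^{-1}\sum_{i=1}^n\{Y_i-\hat\psi(\bma^T\bmX_i)\}\bmX_i
=n^{-1}\sum_{i=1}^n\{Y_i-\hat\psi(\bma^T\bmX_i)\}\bigl\{\bmX_i-\bar\bmX_{\bma}^{(n)}(\bma^T\bmX_i)\bigr\}.
\end{align*}
Subtracting the right-hand side of (\ref{SSE_root_psi_alpha}) and reorganising, the difference equals $F_1+F_2$ with
\begin{align*}
F_1=n^{-1}\sum_{i=1}^n\bigl\{\bmX_i-\bar\bmX_{\bma}^{(n)}(\bma^T\bmX_i)\bigr\}\bigl\{\psi(\bma^T\bmX_i)-\hat\psi(\bma^T\bmX_i)\bigr\},\quad F_2=n^{-1}\sum_{i=1}^n\bigl\{\bar\bmX_{\bma}(\bma^T\bmX_i)-\bar\bmX_{\bma}^{(n)}(\bma^T\bmX_i)\bigr\}\bigl\{Y_i-\psi(\bma^T\bmX_i)\bigr\},
\end{align*}
so it suffices to show $F_1=o_p(n^{-1/2})$ and $F_2=o_p(n^{-1/2})$, uniformly over $\bma\in\B(\bma_0,\d)\cap\cS_{d-1}$.

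For $F_1$ I would split $\bmX_i-\bar\bmX_{\bma}^{(n)}(\bma^T\bmX_i)=\{\bmX_i-\bar\bmX_{\bma}(\bma^T\bmX_i)\}+\{\bar\bmX_{\bma}(\bma^T\bmX_i)-\bar\bmX_{\bma}^{(n)}(\bma^T\bmX_i)\}$. The first piece is $(\P_n-P)$ applied to $\{\bmx-\bar\bmX_{\bma}(\bma^T\bmx)\}\{\psi(\bma^T\bmx)-\hat\psi(\bma^T\bmx)\}$, whose $P$-expectation vanishes because $\bmX-\bar\bmX_{\bma}(\bma^T\bmX)$ is centred given $\bma^T\bmX$; this sits in a Donsker class (a bounded Lipschitz factor times bounded monotone functions of $\bma^T\bmx$, up to a factor $\log n$ from the random sup-norm bound $\|\hat\psi\|_\infty=O_p(\log n)$ coming from (A6), indexed by $\bma$ in a compact set), and its $L_2$-norm is $O_p((\log n)n^{-1/3})$ by Lemma~\ref{lemma:closeness_psi_n_alpha_psi_alpha}, so asymptotic equicontinuity of the empirical process gives $o_p(n^{-1/2})$. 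The second piece is bounded, by Cauchy--Schwarz, by a constant times the maximal width of the cells of $\hat\psi$ times the $L_2$-norm $\|\psi(\bma^T\cdot)-\hat\psi(\bma^T\cdot)\|$, again $O_p((\log n)n^{-1/3})$; using that on the strictly increasing part of $\psi_{\bma}$ the maximal cell width is $O_p((n^{-1}\log n)^{1/3})$, and that on a nearly flat cell $\psi-\hat\psi$ is itself nearly constant so that the contribution of that cell is governed by the fluctuation of $\sum_{i\in B}\{\bmX_i-\bar\bmX_{\bma}^{(n)}(\bma^T\bmX_i)\}$, of order $\sqrt{n_B}$, this term is $o_p(n^{-1/2})$.

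For $F_2$ I would write $Y_i-\psi(\bma^T\bmX_i)=\e_i+\{\psi_0(\bma_0^T\bmX_i)-\psi(\bma^T\bmX_i)\}$. For any fixed partition into intervals, the part attached to $\psi_0-\psi$ has zero expectation (since $\E\{\psi_0(\bma_0^T\bmX)-\psi_{\bma}(\bma^T\bmX)\mid\bma^T\bmX\}=0$ near $\bma_0$), lies in a Donsker class (a bounded-variation function $\bar\bmX_{\bma}-\bar\bmX_{\bma}^{(n)}$ times a fixed bounded factor), and has $L_2$-norm $O_p$ of the maximal cell width; hence it contributes $o_p(n^{-1/2})$. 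The remaining term $n^{-1}\sum_i\e_i\{\bar\bmX_{\bma}(\bma^T\bmX_i)-\bar\bmX_{\bma}^{(n)}(\bma^T\bmX_i)\}$ is a multiplier empirical process indexed by the block structure of $\hat\psi$; restricting to partitions all of whose cells have width at most $\tau_n=C(n^{-1}\log n)^{1/3}$ (an event of probability tending to $1$), the relevant class has pointwise envelope $O(\tau_n)$ and polynomial covering numbers, so a multiplier/maximal inequality bounds it by $O_p(n^{-1/2}\tau_n\sqrt{\log n})=o_p(n^{-1/2})$. Collecting the bounds for $F_1$ and $F_2$ proves (\ref{SSE_root_psi_alpha}).

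The hard part will be the uniform control of $F_1$ and $F_2$ over both $\bma$ and the data-dependent block structure of $\hat\psi_{n,\bma}$, and in particular arranging the logarithmic bookkeeping so that the bound is genuinely $o_p(n^{-1/2})$ rather than the $O_p(n^{-1/2})$ that a crude entropy argument would yield: this is exactly where one must use the sharp $L_2$ rate of Lemma~\ref{lemma:closeness_psi_n_alpha_psi_alpha} together with a sharp bound on the maximal cell width of the isotonic estimate, and be careful on the few, possibly long, nearly flat cells of $\hat\psi$. The argument closely parallels the treatment of Proposition~2 in the supplementary material of \cite{FGH:19}, now without any reparameterisation.
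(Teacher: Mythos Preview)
Your overall strategy---subtract a block-constant function of $\bma^T\bmX_i$ to exploit the isotonic orthogonality, then control the two remainder terms via the $L_2$-rate of Lemma~\ref{lemma:closeness_psi_n_alpha_psi_alpha} together with empirical-process bounds---matches the paper's. The crucial difference is in the \emph{choice} of that block-constant centring. You take $\bar\bmX_{\bma}^{(n)}$ to be $\bar\bmX_{\bma}$ evaluated at the left endpoint of each block of $\hat\psi_{n,\bma}$. The paper instead defines $\bar E_{n,\bma}$ by evaluating $u\mapsto\E\{\bmX\mid\bma^T\bmX=u\}$ at a point of the block that depends on the sign pattern of $\psi_{\bma}-\hat\psi_{n,\bma}$ there (the crossing point $s$ if $\psi_{\bma}$ meets the level $\hat\psi_{n,\bma}(\tau_i)$ inside the block, and an appropriate endpoint otherwise). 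This ``smooth functional'' device, standard in the isotonic literature (see Chapter~10 of \cite{piet_geurt:14} and Section~D.3 of the supplement of \cite{FGH:19}), yields the pointwise inequality
\[
\bigl\|\E\{\bmX\mid\bma^T\bmX=u\}-\bar E_{n,\bma}(u)\bigr\|\le K\,\bigl|\hat\psi_{n,\bma}(u)-\psi_{\bma}(u)\bigr|,
\]
so that the analogue of the second piece of your $F_1$ is bounded directly, by Cauchy--Schwarz, as
\[
O_p\!\left(\int\{\psi_{\bma}(\bma^T\bmx)-\hat\psi_{n,\bma}(\bma^T\bmx)\}^2\,dG(\bmx)\right)=O_p\!\left(n^{-2/3}(\log n)^2\right)=o_p\!\left(n^{-1/2}\right),
\]
with no reference to cell widths whatsoever. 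The same inequality disposes of the term corresponding to your $F_2$ after one $\P_n\to P$ replacement.

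Your left-endpoint choice does not give this pointwise bound, which is why you are forced into a cell-width analysis and a separate treatment of ``nearly flat'' blocks. That part of your sketch has a genuine gap: the event $\{\text{all cell widths}\le C(n^{-1}\log n)^{1/3}\}$ need not have probability tending to~$1$ under (A1)--(A8), since nothing there prevents $\psi_{\bma}$ (or $\psi_0$) from having flat or nearly flat stretches, and on such stretches the isotonic blocks are typically of macroscopic length. Your fallback argument for those blocks (``$\psi-\hat\psi$ nearly constant, so the contribution is governed by $\sqrt{n_B}$ fluctuations'') is not sharp enough to deliver $o_p(n^{-1/2})$ uniformly in $\bma$, and the same difficulty recurs in your treatment of $F_2$. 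Replacing your $\bar\bmX_{\bma}^{(n)}$ by the paper's $\bar E_{n,\bma}$ eliminates the cell-width issue entirely and collapses the argument to a couple of short steps; I would adopt that construction rather than try to repair the flat-block analysis.
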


We now have the following limit result.

\begin{theorem}[Asymptotic normality of the SSE]
\label{th:asympt_normality_SSE}
Let $\hat\bma_n$ be the minimizer of
\begin{align}
\label{crossing_of_zero}
\left\|n^{-1}\sum_{i=1}^n\left\{Y_i-\hat\psi_{n,\bma}(\bma^T\bmX_i)\right\}\bmX_i\right\|^2,
\end{align}
for $\bma\in\cS_{d-1}$, where $\|\cdot\|$ denotes the Euclidean norm.
		Let the matrices $\bm A$ and $\bm\Sigma$ be defined by:
		\begin{align}
		\label{def_A}
		\bm A=\E\Bigl[\psi_0'(\bma_0^T\bm X)\,\text{\rm Cov}(\bm X|\bm\a_0^T\bm X)\Bigr],
		\end{align}
		and
		\begin{align}
		\label{def_Sigma}
		\bm \Sigma=\E\left[\left\{Y -\psi_0(\bm\a_0^T\bm X)\right\}^2\,\left\{\bm X -\E(\bm X|\bm\a_0^T\bm X) \right\}\left\{\bm X -\E(\bm X|\bm\a_0^T\bm X) \right\}^T\right].
		\end{align}
Then, under conditions $(A1)$ to $(A8)$ we have:	 
		\begin{align*}
		\sqrt n (\hat \bma_n - \bma_0) \to_d N\left(\bm 0,  \bm A^- \bm \Sigma \bm A^-\right),
		\end{align*}
		where $\bm A^{-}$ is the Moore-Penrose inverse of $\bm A$.
\end{theorem}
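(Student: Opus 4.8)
The plan is to combine Lemma~\ref{lemma:transition_psi_n-psi_alpha} with a first-order analysis of the deterministic map $\f$ of Lemma~\ref{lemma:phi_n-phi}, and then to read off the limit from a constrained quadratic minimisation on the sphere. Write $\f_n,\f$ as in Lemma~\ref{lemma:phi_n-phi}, so that $\hat\bma_n$ minimises $\bma\mapsto\|\f_n(\bma)\|^2$ over $\cS_{d-1}$, $\f(\bma_0)=\bm0$, and put
\[
V_n(\bma)=\int\left\{\bmx-\E(\bmX|\bma^T\bmx)\right\}\left\{y-\psi_{\bma}(\bma^T\bmx)\right\}\,d(\P_n-P)(\bmx,y).
\]
Since $\E(Y|\bma^T\bmX)=\psi_{\bma}(\bma^T\bmX)$ near $\bma_0$, one has $\int\{\bmx-\E(\bmX|\bma^T\bmx)\}\{y-\psi_{\bma}(\bma^T\bmx)\}\,dP=\f(\bma)$, so Lemma~\ref{lemma:transition_psi_n-psi_alpha} rewrites $\f_n(\hat\bma_n)$ as $\f(\hat\bma_n)+V_n(\hat\bma_n)+o_p(n^{-1/2})$, and the same computation, carried out uniformly over $\B(\bma_0,\d)$, gives
\[
\f_n(\bma)=\f(\bma)+V_n(\bma)+o_p(n^{-1/2})
\]
uniformly over $\{\bma\in\cS_{d-1}:\|\bma-\bma_0\|\le Cn^{-1/2}\}$; in particular $V_n(\bma_0)=n^{-1}\sum_{i=1}^n\{\bmX_i-\E(\bmX|\bma_0^T\bmX_i)\}\{Y_i-\psi_0(\bma_0^T\bmX_i)\}$, whose $P$-integral is $\f(\bma_0)=\bm0$.

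I would next linearise $\f$ and prove $\sqrt n$-consistency. Differentiating the identity $\E[h(\bma^T\bmX)\{\psi_0(\bma_0^T\bmX)-\psi_{\bma}(\bma^T\bmX)\}]=0$ (valid for bounded $h$ and $\bma$ near $\bma_0$) at $\bma_0$ forces $\frac{d}{dt}\psi_{\bma_0+t\bmv}((\bma_0+t\bmv)^T\bmx)|_{t=0}$ to have conditional mean zero given $\bma_0^T\bmX$, hence to equal $\psi_0'(\bma_0^T\bmx)\{\bmx-\E(\bmX|\bma_0^T\bmx)\}^{T}\bmv$ for tangent directions $\bmv\perp\bma_0$; inserting this into $\f(\bma)=\E[\{\bmX-\E(\bmX|\bma^T\bmX)\}\{\psi_0(\bma_0^T\bmX)-\psi_{\bma}(\bma^T\bmX)\}]$ and using $\f(\bma_0)=\bm0$ yields $\f(\bma)=-\bm A(\bma-\bma_0)+o(\|\bma-\bma_0\|)$ with $\bm A$ as in (\ref{def_A}). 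Here $\bm A$ is symmetric positive semidefinite, $\bm A\bma_0=\bm0$ (because $\mathrm{Cov}(\bmX|\bma_0^T\bmX)\bma_0=\bm0$), and by (A8) it has rank $d-1$ with kernel $\mathrm{span}(\bma_0)$; thus $\bm A$ is a bijection of $\bma_0^{\perp}$ with inverse $\bm A^-$ there and $\bm A\bm A^-=\bm A^-\bm A=P_{\bma_0^{\perp}}$. The functions $(\bmx,y)\mapsto\{\bmx-\E(\bmX|\bma^T\bmx)\}\{y-\psi_{\bma}(\bma^T\bmx)\}$, indexed by $\bma$ near $\bma_0$, form a Donsker class with square-integrable envelope (by (A1), (A4)--(A6) and $L_2(P)$-continuity of $\bma\mapsto\psi_{\bma}$ and $\bma\mapsto\E(\bmX|\bma^T\,\cdot\,)$), so $\sup_{\|\bma-\bma_0\|\le\d}\|V_n(\bma)\|=O_p(n^{-1/2})$ and $\sup_{\|\bma-\bma_0\|\le\eta_n}\|V_n(\bma)-V_n(\bma_0)\|=o_p(n^{-1/2})$ for any $\eta_n\downarrow0$. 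Then $\|\f_n(\hat\bma_n)\|\le\|\f_n(\bma_0)\|=O_p(n^{-1/2})$ gives $\|\f(\hat\bma_n)\|=O_p(n^{-1/2})$, and combining with $\hat\bma_n\to_p\bma_0$ (already proved) and the lower bound $\|\f(\bma)\|\ge c\|\bma-\bma_0\|$ near $\bma_0$ — split $\bma-\bma_0$ into a tangential part of size $\asymp\|\bma-\bma_0\|$ on which $\bm A$ is invertible and a radial part of size $O(\|\bma-\bma_0\|^2)$ killed by $\bm A$ — I get $\|\hat\bma_n-\bma_0\|=O_p(n^{-1/2})$.

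With $\sqrt n$-consistency available, the linearisation and equicontinuity turn the expansion into $\f_n(\bma)=V_n(\bma_0)-\bm A\bmv+o_p(n^{-1/2})$ uniformly over $\{\bma\in\cS_{d-1}:\|\bma-\bma_0\|\le Cn^{-1/2}\}$, where $\bmv:=P_{\bma_0^{\perp}}(\bma-\bma_0)$ and I used $\bma-\bma_0=\bmv-\tfrac12\|\bmv\|^2\bma_0+O(\|\bmv\|^3)$ together with $\bm A\bma_0=\bm0$. Since $\bm A\bmv\perp\bma_0$, every such $\bma$ satisfies $\|\f_n(\bma)\|^2\ge(\bma_0^{T}V_n(\bma_0))^2+o_p(n^{-1})$, while the projection $\tilde\bma_n$ of $\bma_0+\bm A^-V_n(\bma_0)$ onto $\cS_{d-1}$ — which lies in the neighbourhood because $\|\bm A^-V_n(\bma_0)\|=O_p(n^{-1/2})$ — attains this bound up to $o_p(n^{-1})$. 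Comparing $\|\f_n\|^2$ at $\hat\bma_n$ with its value at $\tilde\bma_n$ forces $\bm A\,\hat\bmv_n=P_{\bma_0^{\perp}}V_n(\bma_0)+o_p(n^{-1/2})$ for $\hat\bmv_n:=P_{\bma_0^{\perp}}(\hat\bma_n-\bma_0)$; applying $\bm A^-$, using $\bm A^-\bm A\hat\bmv_n=\hat\bmv_n$, and adding the $O_p(n^{-1})$ radial part of $\hat\bma_n-\bma_0$, this gives $\hat\bma_n-\bma_0=\bm A^-V_n(\bma_0)+o_p(n^{-1/2})$. Finally, $\sqrt n\,V_n(\bma_0)=n^{-1/2}\sum_{i=1}^n\{Y_i-\psi_0(\bma_0^T\bmX_i)\}\{\bmX_i-\E(\bmX|\bma_0^T\bmX_i)\}\to_d N(\bm0,\bm\Sigma)$ with $\bm\Sigma$ as in (\ref{def_Sigma}) by the central limit theorem, so $\sqrt n(\hat\bma_n-\bma_0)=\bm A^-\sqrt n\,V_n(\bma_0)+o_p(1)\to_d N(\bm0,\bm A^-\bm\Sigma\bm A^-)$, using that $\bm A^-$ is symmetric.

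The hard part should be the uniform validity, over a shrinking neighbourhood of $\bma_0$, of the expansion $\f_n(\bma)=-\bm A(\bma-\bma_0)+V_n(\bma_0)+o_p(n^{-1/2})$: this requires the $\bma$-differentiability of $\psi_{\bma}$ with remainders controlled uniformly (to produce $\bm A$) on top of the isotonic-estimator accounting already done in Lemma~\ref{lemma:transition_psi_n-psi_alpha}, together with the stochastic equicontinuity of the empirical process $V_n$; and, conceptually, the delicate point is converting the sphere-constrained minimisation of the piecewise-constant, non-differentiable criterion $\|\f_n\|^2$ into an explicit formula for $\hat\bma_n-\bma_0$, where the singularity of $\bm A$ (forced by the constraint $\bma\in\cS_{d-1}$) is exactly what the Moore--Penrose inverse absorbs.
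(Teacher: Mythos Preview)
Your proposal is correct and follows essentially the same approach as the paper: both combine Lemma~\ref{lemma:transition_psi_n-psi_alpha} with a linearisation of $\f$ at $\bma_0$ (yielding the derivative matrix $-\bm A$), empirical-process equicontinuity for $V_n$, and an argmin comparison on the sphere to extract the asymptotic linear representation $\hat\bma_n-\bma_0=\bm A^-V_n(\bma_0)+o_p(n^{-1/2})$. Your write-up is in fact a bit more explicit than the paper's in two places --- you isolate $\sqrt n$-consistency as a separate step before the final expansion, and you make the argmin comparison concrete via the near-optimal point $\tilde\bma_n$ --- whereas the paper compresses these into relations (3.12)--(3.14); conversely, the paper spells out the computation showing the derivative of $\f$ at $\bma_0$ equals $-\bm A$ (using Lemma~10 of the cited supplement), which you sketch more briefly.
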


\vspace{0.5cm}
\begin{example}[Continuation of Example \ref{example_uniform_covariates2}] 
{\rm We compute the asymptotic covariance matrix for Example \ref{example_uniform_covariates2}. In this case we get for matrix $\bm A$ in part (ii) of Theorem \ref{th:asympt_normality_SSE}:
\begin{align*}
\bmA&=\E\Bigl[\psi_0'(\bma_0^T\bm X)\,\text{\rm Cov}(\bm X|\bm\a_0^T\bm X)\Bigr]\\
&=\frac34\E\left[\left(\frac{X_1+X_2}{\sqrt{2}}\right)^2\left(\bmX-\E(\bmX|\bma_0^T\bmX)\right)\left(\bmX-\E(\bmX|\bma_0^T\bmX)\right)^T\right]\\
&=\left(\begin{array}{rrr}
1/15 &-1/15\\
-1/15	&1/15
\end{array}
\right).
\end{align*}
The Moore-Penrose inverse of $\bmA$ is given by:
\begin{align*}
\bmA^-=\left(\begin{array}{rrr}
15/4 &-15/4\\
-15/4	&15/4
\end{array}
\right).
\end{align*}
Furthermore, we get:
\begin{align*}
\bm \Sigma&=\E\left[\left\{Y -\psi_0(\bm\a_0^T\bm X)\right\}^2\,\left\{\bm X -\E(\bm X|\bm\a_0^T\bm X) \right\}\left\{\bm X -\E(\bm X|\bm\a_0^T\bm X) \right\}^T\right]\\
&=\E\left\{\bm X -\E(\bm X|\bm\a_0^T\bm X) \right\}\left\{\bm X -\E(\bm X|\bm\a_0^T\bm X) \right\}^T\\
&=\left(\begin{array}{rrr}
1/24 &-1/24\\
-1/24	&1/24
\end{array}
\right).
\end{align*}
So the asymptotic covariance matrix is given by:
\begin{align*}
\bmA^-\bm\Sigma\bmA^-=\left(\begin{array}{rrr}
75/32 &-75/32\\
-75/32	&75/32
\end{array}
\right)
\approx
\left(\begin{array}{rrr}
2.34375 &-2.34375\\
-2.34375	&2.34375
\end{array}
\right).
\end{align*}
}
\end{example}

\vspace{0.3cm}
\begin{remark}
{\rm Theorem \ref{th:asympt_normality_SSE} corresponds to Theorem 3 in \cite{FGH:19}, but note that the estimator has a different definition. Reparameterization is also avoided.
}
\end{remark}

\section{The limit theory for the ESE and cubic spline estimator}
\label{sec:ESE_PLSE}
\setcounter{equation}{0}

The proofs of the consistency and asymptotic normality of the ESE and spline estimator are highly similar to the proofs of these facts for the SSE in the preceding section. The only extra ingredient is occurrence of the estimate of the derivative of the link function. We only discuss the asymptotic normality.

In addition to the assumptions (A1) to (A7), we now assume:
\begin{enumerate}
\item[(A8')] $\psi_{\bma}$ is twice differentiable on $(\inf_{x\in{\cal X}}(\bma^T\bmx),\sup_{x\in{\cal X}}(\bma^T\bmx))$.
\item[(A9)] The matrix
\begin{align*}
\E\left[\psi_0'(\bma_0^T\bmX)^2\,\text{cov}(\bmX|\bma_0^T\bmX)\right]
\end{align*}
has rank $d-1$.
\end{enumerate}

An essential step is again to show that
\begin{align*}
&\int\bmx\left\{y-\hat\psi_{n,\hat\bma_n}(\hat\bma_n^T\bmx)\right\}\hat\psi'_{n\hat\bma_n}(\hat\bma_n^T\bmx)\,d\P_n(\bmx,y)\\
&=\int\left\{\bmx-\E(X|\hat\bma_n^T\bmX)\right\}\left\{y-\hat\psi_{n,\hat\bma_n}(\hat\bma_n^T\bmx)\right\}\hat\psi'_{n\hat\bma_n}(\hat\bma_n^T\bmx)\,d\P_n(\bmx,y)
+o_p(n^{-1/2})+o_p(\hat\bma_n-\bma_0),
\end{align*}
For the ESE this is done by defining the piecewise constant function $\bar{\rho}_{n, \bma}$ for $u$ in the interval between successive jumps $\tau_i$  and $\tau_{i+1})$  of $\hat\psi_{n\bma}$ by:
\begin{eqnarray*}
	\bar {\rho}_{n, \bma}(u)  =  \left \{
	\begin{array}{lll}
		\E[\bm X| \bma^T\bm X= \t_i]\psi_{\bma}'(\t_i) \ \  \ \ \ \ \ \ \textrm{ if $\psi_{\bma}(u)  > \hat\psi_{n\bma}(\tau_i)$  \ for all $u \in (\tau_i, \tau_{i+1})$}, \\
		\E[\bm X| \bma^T\bm X= s]\psi_{\bma}'(s) \ \ \ \  \ \  \  \ \ \ \textrm{ if $\psi_{\bma}(s)  = \hat\psi_{n\bma}(s)$ \ for some $s \in (\tau_i, \tau_{i+1})$}, \\
		\E[\bm X| \bma^T\bm X= \t_{i+1}]\psi_{\bma}'(\t_{i+1})\ \ \ \textrm{if $\psi_{\bma}(u) < \hat\psi_{n\bma}(\tau_i)$  \ for all $u \in (\tau_i, \tau_{i+1})$}. 
	\end{array}
	\right.
\end{eqnarray*}
where $\bar{\rho}_{n,\bma}$ replaces $\bar E_{n, \bma}$ in (\ref{E_n-def}), see Appendix E in the supplement of \cite{FGH:19}. The remaining part of the proof runs along the same lines as the proof for the SSE. For additional details, see Appendix E in the supplement of \cite{FGH:19}.

The corresponding step in the proof for the spline estimator is given by the following lemma.

\begin{lemma}
\label{lemma:spline_lemma}
Let the conditions of Theorem 5 in  \cite{arun_rohit:20} be satisfied. In particular, let the penalty parameter $\m_n$ satisfy $\m_n=o_p(n^{-1/2})$. Then we have for all $\bma$ in a neighborhood of $\bma_0$ and for the corresponding natural cubic spline  $\hat\psi_{n\bma}$:
\begin{align*}
\int\E(\bmX|\bma^T\bmX)\left\{y-\hat\psi_{n\bma}\left(\bma^T\bmx\right)\right\}\hat\psi_{n\bma}'\left(\bma^T\bmx\right)\,d\P_n(\bmx,y)=O_p(\mu_n)=o_p\left(n^{-1/2}\right).
\end{align*}
\end{lemma}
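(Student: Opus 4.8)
The plan is to derive the identity from the first-order stationarity condition of the penalised least squares problem (\ref{spline}), evaluated at a well-chosen test function, and then to bound the resulting roughness cross-term using the uniform control on $\hat\psi_{n\bma}$, $\hat\psi_{n\bma}'$ and $\int(\hat\psi_{n\bma}'')^2$ that comes with the hypotheses of Theorem~5 in \cite{arun_rohit:20}. Throughout write $t_i=\bma^T\bmX_i$, $a=\min_i t_i$, $b=\max_i t_i$ and $m_{\bma}(u)=\E\bigl(\bmX\mid\bma^T\bmX=u\bigr)$.

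First I would record that, since $\hat\psi_{n\bma}$ minimises (\ref{spline}) over ${\cal S}_2[a,b]$, setting the first-order variation equal to zero gives, for every $h\in{\cal S}_2[a,b]$,
\begin{align*}
\frac1n\sum_{i=1}^n\bigl\{Y_i-\hat\psi_{n\bma}(t_i)\bigr\}\,h(t_i)=\mu_n\int_a^b\hat\psi_{n\bma}''(x)\,h''(x)\,dx ,
\end{align*}
(with the penalty normalised as in \cite{arun_rohit:20}), and that the minimiser is a natural cubic spline, so $\hat\psi_{n\bma}''(a)=\hat\psi_{n\bma}''(b)=0$ (\cite{green_silverman:94}). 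Next I would check that, by the smoothness of the density in (A5), each coordinate $m_{\bma,j}$ is continuously differentiable with Lipschitz derivative on a neighbourhood of $[a,b]$, with bounds uniform over $\bma\in\B(\bma_0,\d)$; hence $h_j:=m_{\bma,j}\,\hat\psi_{n\bma}'$ lies in ${\cal S}_2[a,b]$, being differentiable with $h_j'=m_{\bma,j}'\hat\psi_{n\bma}'+m_{\bma,j}\hat\psi_{n\bma}''$, which is Lipschitz (hence absolutely continuous) because for a cubic spline $\hat\psi_{n\bma}'$ and $\hat\psi_{n\bma}''$ are Lipschitz on $[a,b]$. Substituting $h=h_j$ into the displayed stationarity identity yields exactly the $j$-th coordinate of
\begin{align*}
\int\E\bigl(\bmX\mid\bma^T\bmX\bigr)\bigl\{y-\hat\psi_{n\bma}(\bma^T\bmx)\bigr\}\hat\psi_{n\bma}'(\bma^T\bmx)\,d\P_n(\bmx,y)=\mu_n\int_a^b\hat\psi_{n\bma}''(x)\bigl(m_{\bma,j}(x)\hat\psi_{n\bma}'(x)\bigr)''\,dx .
\end{align*}

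It then remains to show that the integral on the right-hand side is $O_p(1)$, uniformly for $\bma$ in a neighbourhood of $\bma_0$. Here I would expand $(m_{\bma,j}\hat\psi_{n\bma}')''=m_{\bma,j}''\hat\psi_{n\bma}'+2m_{\bma,j}'\hat\psi_{n\bma}''+m_{\bma,j}\hat\psi_{n\bma}'''$, integrate the term containing $\hat\psi_{n\bma}'''$ by parts, and use the natural boundary conditions $\hat\psi_{n\bma}''(a)=\hat\psi_{n\bma}''(b)=0$ to obtain
\begin{align*}
\int_a^b\hat\psi_{n\bma}''\bigl(m_{\bma,j}\hat\psi_{n\bma}'\bigr)''\,dx=\int_a^b m_{\bma,j}''\,\hat\psi_{n\bma}'\,\hat\psi_{n\bma}''\,dx+\tfrac32\int_a^b m_{\bma,j}'\,\bigl(\hat\psi_{n\bma}''\bigr)^2\,dx .
\end{align*}
Cauchy--Schwarz, the uniform bounds $\|m_{\bma,j}'\|_\infty+\|m_{\bma,j}''\|_\infty=O(1)$, and the bounds $\|\hat\psi_{n\bma}'\|_\infty=O_p(1)$ and $\int_a^b(\hat\psi_{n\bma}'')^2=O_p(1)$ (uniformly over $\bma\in\B(\bma_0,\d)$, which I would take from Theorem~5 of \cite{arun_rohit:20} and its proof) then make the right-hand side $O_p(1)$. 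Consequently the displayed integral is $\mu_n\,O_p(1)=O_p(\mu_n)$, which is $o_p(n^{-1/2})$ because $\mu_n=o_p(n^{-1/2})$.

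The main obstacle is the roughness control, i.e. $\sup_{\bma\in\B(\bma_0,\d)}\int(\hat\psi_{n\bma}'')^2=O_p(1)$ together with the uniform sup-norm bounds on $\hat\psi_{n\bma}$ and $\hat\psi_{n\bma}'$: this is exactly where the full strength of the hypotheses of Theorem~5 in \cite{arun_rohit:20} is needed (in particular a lower restriction on the rate of $\mu_n$, of order roughly $n^{-4/5}$, preventing the penalised spline from overfitting). Everything else is routine: the first-order variational identity, the verification that $h_j\in{\cal S}_2[a,b]$, the integration by parts exploiting the natural boundary conditions, and the smoothness of $m_{\bma}$ inherited from (A5). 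A minor bookkeeping point is the normalisation of the penalty term (the factor $n$ in (\ref{spline}) versus the convention of \cite{arun_rohit:20}), which affects only a constant and is immaterial for an order statement.
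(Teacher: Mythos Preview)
Your proof is correct and rests on the same idea as the paper's, namely the first-order optimality of the penalised spline $\hat\psi_{n\bma}$, but you implement it through a different perturbation. The paper parametrises the variation by an argument shift, setting
\[
\varphi(\bm v)=\int\bigl\{y-\hat\psi_{n\bma}\bigl(\bma^T\bmx+\bm v^Tm_{\bma}(\bma^T\bmx)\bigr)\bigr\}^2\,d\P_n(\bmx,y)+\mu_n\int_a^b\bigl\{\hat\psi_{n\bma}''\bigl(t+\bm v^Tm_{\bma}(t)\bigr)\bigr\}^2\,dt,
\]
and differentiates at $\bm v=\bm0$; after one integration by parts the penalty contribution collapses to $-\int_a^b(\hat\psi_{n\bma}'')^2\,m_{\bma}'\,dt$, so the paper only needs $\|m_{\bma}'\|_\infty$ bounded and $\int(\hat\psi_{n\bma}'')^2=O_p(1)$. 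You instead insert the test function $h_j=m_{\bma,j}\,\hat\psi_{n\bma}'$ directly into the linear Euler--Lagrange identity. This is the more standard and transparent route, and it sidesteps the question of why the paper's surrogate $\varphi$ should have a stationary point at $\bm0$ (its penalty term is not literally $\int(\psi_{\bm v}'')^2$ for the composite $\psi_{\bm v}$). The price is that your penalty cross-term retains the pieces $\int m_{\bma,j}''\,\hat\psi_{n\bma}'\,\hat\psi_{n\bma}''$ and $\tfrac32\int m_{\bma,j}'(\hat\psi_{n\bma}'')^2$, so you additionally require $\|m_{\bma}''\|_\infty<\infty$ and $\|\hat\psi_{n\bma}'\|_\infty=O_p(1)$; both are available under the hypotheses of Theorem~5 in \cite{arun_rohit:20}, as you note.
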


\begin{remark}
{\rm The result shows that we have as our  basic equation in $\bma$:
\begin{align*}
&\frac1n\sum_{i=1}^n \bigl\{\hat\psi_{n\bma}(\bma^T\bmX_i)-Y_i\bigr\}\hat\psi'_{n\bma}(\bma^T\bmX_i)\bmX_i\\
&=\frac1n\sum_{i=1}^n \bigl\{\hat\psi_{n\bma}(\bma^T\bmX_i)-Y_i\bigr\}\hat\psi'_{n\bma}(\bma^T\bmX_i)\left\{\bmX_i-\E(\bmX_i|\bma^T\bmX_i)\right\}+o_p\left(n^{-1/2}\right)\\
&=o_p\left(n^{-1/2}\right).
\end{align*}
}
\end{remark}

The remaining part of the proof of the asymptotic normality can either run along the same lines as the proof for the corresponding fact for the SSE, using the function $u\mapsto\psi_{\bma}(u)=\E\{\psi_0(\bma^T\bmx)|\bma^T\bmX=u\}$, or directly use the convergence of $\hat\psi_{n\hat\bma_n}$ to $\psi_0$ and of $\hat\psi'_{n\hat\bma_n}$ to $\psi_0'$ (see Theorem 3 in \cite{arun_rohit:20}). For the SSE and ESE we were forced to introduce the intermediate function $\psi_{\bma}$ to get to the derivatives, because for these estimators the derivative of $\hat\psi_{n\hat\bma_n}$ did not exist.

We get the following result.

\begin{theorem}
	\label{theorem:asymptotics-efficient}
	Let either $\hat\bma_n$ be the ESE of $\bma_0$ and  let Assumptions (A1) to (A7) and (A8') and (A9) of the present section be satisfied or let $\hat\bma_n$ be the spline estimator of $\bma_0$ and let Assumptions (A0) to (A6) and (B1) to (B3) of  \cite{arun_rohit:20}) be satisfied.
	Moreover, let the bandwidth $h \asymp n^{-1/7}$ in the estimate of the derivative of $\psi_{\bma}$ for the ESE.
			Define the matrices,
		\begin{align}
		\label{def:I1}
		\tilde {\bm A}:=\E\Bigl[\psi_0'(\bm\a_0^T\bm X)^2\,\text{\rm Cov}(\bm X|\bm\a_0^T\bm X)\Bigr],
		\end{align}
		and
		\begin{align}
		\label{def:I2}
		\tilde {\bm \Sigma}:=\E\left[\left\{Y -\psi_0(\bm\a_0^T\bm X)\right\}^2\psi_0'(\bm\a_0^T\bm X)^2\left\{\bm X -\E(\bm X|\bm\a_0^T\bm X) \right\}\left\{\bm X -\E(\bm X|\bm\a_0^T\bm X) \right\}^T\right].
		\end{align}
		Then 
		\begin{align*}
		\sqrt n (\tilde \bma_n - \bma_0) \to_d N_{d}\left(\bm 0,  \tilde {\bm A}^- \tilde {\bm \Sigma} \tilde {\bm A}^-\right),
		\end{align*}
		where $\tilde {\bm A}^{-}$ is the Moore-Penrose inverse of $\tilde {\bm A}$.	
\end{theorem}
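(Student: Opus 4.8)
The plan is to follow, step for step, the proof of Theorem~\ref{th:asympt_normality_SSE}, replacing the score integrand $\bmx\{y-\hat\psi_{n,\bma}(\bma^T\bmx)\}$ by the weighted integrand $\bmx\{y-\hat\psi_{n,\bma}(\bma^T\bmx)\}\,\hat\psi'_{n,\bma}(\bma^T\bmx)$, where for the ESE the derivative factor is the kernel estimate $\tilde\psi'_{n,h,\bma}$ of \eqref{estimate_derivative_psi} and for the spline it is the genuine derivative of the penalized least squares fit. As in the SSE argument I would first invoke consistency of $\hat\bma_n$ (proved as in \cite{FGH:19} for the ESE, and in \cite{arun_rohit:20} for the spline) to assume $\hat\bma_n$ lies in a small neighborhood of $\bma_0$, and then run a four-step scheme: (i) a transition from $(\hat\psi_{n,\bma},\hat\psi'_{n,\bma})$ to $(\psi_{\bma},\psi_{\bma}')$; (ii) an expansion around $\bma_0$ exactly as in \eqref{SSE_expansion}; (iii) exploitation of the fact that $\hat\bma_n$ minimizes a Euclidean norm, so the leading linear part may be set to $\bm0$; and (iv) identification of the limiting derivative matrix followed by the CLT and inversion via the Moore--Penrose generalized inverse.

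For step~(i) in the ESE case I would introduce the piecewise constant centering function $\bar\rho_{n,\bma}$ displayed in Section~\ref{sec:ESE_PLSE} --- the analogue of $\bar E_{n,\bma}$ in \eqref{E_n-def}, now carrying the weight $\psi_{\bma}'$ --- which by construction satisfies $\int\bar\rho_{n,\bma}(\bma^T\bmx)\{y-\hat\psi_{n,\bma}(\bma^T\bmx)\}\,d\P_n(\bmx,y)=\bm0$, and then repeat the decomposition \eqref{SSE-decomp}--\eqref{SSE-rewrite} verbatim, controlling the remainder by $O_p(\|\hat\psi_{n,\bma}-\psi_{\bma}\|^2)=O_p(n^{-2/3}(\log n)^2)$ uniformly over $\bma$ near $\bma_0$ through Lemma~\ref{lemma:closeness_psi_n_alpha_psi_alpha}, condition (A5), and a Lipschitz bound on $\bar\rho_{n,\bma}-\E(\bmX|\bma^T\bmX=\cdot)\psi_{\bma}'$ in terms of $|\hat\psi_{n,\bma}-\psi_{\bma}|$. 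The new ingredient, absent from the SSE proof, is replacing the estimated derivative $\tilde\psi'_{n,h,\bma}$ by $\psi_{\bma}'$; here I would use the standard bias bound $O(h^2)$ (via condition (A8$'$)) and stochastic-error bounds for the kernel estimate \eqref{estimate_derivative_psi} together with Lemma~\ref{lemma:closeness_psi_n_alpha_psi_alpha}, exploiting that these errors, being essentially non-random given $\bma^T\bmX$, multiply residuals of conditional mean zero; the choice $h\asymp n^{-1/7}$ is precisely what makes the resulting remainder $o_p(n^{-1/2})$, as carried out in Appendix~E of \cite{FGH:19}. For the spline, step~(i) is exactly Lemma~\ref{lemma:spline_lemma}: the identity $\int\E(\bmX|\bma^T\bmX)\{y-\hat\psi_{n\bma}(\bma^T\bmx)\}\hat\psi'_{n\bma}(\bma^T\bmx)\,d\P_n=O_p(\mu_n)=o_p(n^{-1/2})$ plays the role of the vanishing $\bar\rho_{n,\bma}$-integral, no intermediate $\psi_{\bma}$ is needed since the spline derivative genuinely exists, and $\hat\psi_{n,\bma},\hat\psi'_{n,\bma}$ may be compared directly to $\psi_0,\psi_0'$ by Theorems~2 and~3 of \cite{arun_rohit:20} under $\mu_n=o_p(n^{-1/2})$.

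After step~(i) both estimators satisfy
\begin{align*}
&\int\bmx\{y-\hat\psi_{n,\hat\bma_n}(\hat\bma_n^T\bmx)\}\hat\psi'_{n,\hat\bma_n}(\hat\bma_n^T\bmx)\,d\P_n(\bmx,y)\\
&=\int\{\bmx-\E(\bmX|\bma_0^T\bmx)\}\{y-\psi_0(\bma_0^T\bmx)\}\psi_0'(\bma_0^T\bmx)\,d(\P_n-P)(\bmx,y)
+\GG(\hat\bma_n-\bma_0)+o_p(n^{-1/2})+o_p(\|\hat\bma_n-\bma_0\|),
\end{align*}
where $\GG$ is the derivative at $\bma_0$ of $\bma\mapsto\int\{\bmx-\E(\bmX|\bma^T\bmx)\}\{y-\psi_{\bma}(\bma^T\bmx)\}\psi_{\bma}'(\bma^T\bmx)\,dP(\bmx,y)$. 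Since $\hat\bma_n$ minimizes the Euclidean norm of the left-hand side, and the affine-in-$(\bma-\bma_0)$ part on the right can be driven to $\bm0$ by solving a linear equation in $\R^{d-1}$ --- solvable because, by (A9), $\GG=-\tilde{\bm A}$ has rank $d-1$, and because both $\GG$ and the empirical-process term annihilate $\bma_0$ (indeed $\bma_0^T\{\bmx-\E(\bmX|\bma_0^T\bmx)\}=0$ and $\bma_0^T\tilde{\bm A}=\bm0$) --- the left-hand side is itself $o_p(n^{-1/2})+o_p(\|\hat\bma_n-\bma_0\|)$, which, using that $\hat\bma_n-\bma_0$ is perpendicular to $\bma_0$ up to $O(\|\hat\bma_n-\bma_0\|^2)$, forces $\sqrt n\|\hat\bma_n-\bma_0\|=O_p(1)$. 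The matrix $\GG$ is computed by the product rule using $\frac{\partial}{\partial\bma}\psi_{\bma}(\bma^T\bmx)\big|_{\bma=\bma_0}=(\bmx-\E[\bmX|\bma_0^T\bmX=\bma_0^T\bmx])\psi_0'(\bma_0^T\bmx)$ (Lemma~10 of the supplement of \cite{FGH:19}): the two terms obtained by differentiating the factors $\E(\bmX|\bma^T\bmx)$ and $\psi_{\bma}'(\bma^T\bmx)$ are multiplied by $\{y-\psi_0(\bma_0^T\bmx)\}$, hence integrate to $\bm0$ since $\E[\e\mid\bmX]=0$ gives $\E[f(\bmX)\{Y-\psi_0(\bma_0^T\bmX)\}]=\bm0$ for every $f$, while differentiating $\psi_{\bma}$ yields $-\E[\psi_0'(\bma_0^T\bmX)^2\,\text{Cov}(\bmX|\bma_0^T\bmX)]=-\tilde{\bm A}$. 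Combining, $\tilde{\bm A}\,\sqrt n(\hat\bma_n-\bma_0)=-\sqrt n\int\{\bmx-\E(\bmX|\bma_0^T\bmx)\}\{y-\psi_0(\bma_0^T\bmx)\}\psi_0'(\bma_0^T\bmx)\,d(\P_n-P)(\bmx,y)+o_p(1)$; the right-hand side is a normalized sum of i.i.d.\ mean-zero vectors with covariance $\tilde{\bm\Sigma}$, so the classical CLT and inversion of $\tilde{\bm A}$ on $\bma_0^\perp$ (equivalently, multiplication by $\tilde{\bm A}^-$) yield the stated $N_d(\bm0,\tilde{\bm A}^-\tilde{\bm\Sigma}\tilde{\bm A}^-)$ limit.

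I expect the genuine obstacle to be the derivative-replacement part of step~(i) for the ESE: showing that substituting the kernel estimate $\tilde\psi'_{n,h,\bma}$ for $\psi_{\bma}'$ --- inside a product that already carries the $O_p(n^{-1/3}\log n)$-sized factor $\hat\psi_{n,\bma}-\psi_{\bma}$ and an empirical-process centering --- costs only $o_p(n^{-1/2})$, uniformly for $\bma$ in a neighborhood of $\bma_0$. This is the delicate bias--variance trade-off dictating $h\asymp n^{-1/7}$, and it is the only point where the tuning-parameter estimators differ essentially from the SSE; for the spline the analogous difficulty is already packaged into Lemma~\ref{lemma:spline_lemma} and the requirement $\mu_n=o_p(n^{-1/2})$, so there the proof reduces almost entirely to the bookkeeping of steps~(ii)--(iv).
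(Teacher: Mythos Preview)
Your proposal is correct and follows essentially the same route as the paper: the paper gives only a sketch, pointing out that the argument is the SSE proof with $\bar E_{n,\bma}$ replaced by the $\psi_{\bma}'$-weighted $\bar\rho_{n,\bma}$ for the ESE and by Lemma~\ref{lemma:spline_lemma} for the spline, and that the only new ingredient is the derivative-replacement step (handled in Appendix~E of \cite{FGH:19} with $h\asymp n^{-1/7}$, respectively via $\mu_n=o_p(n^{-1/2})$ and Theorems~2--3 of \cite{arun_rohit:20}). You reproduce this outline faithfully, including the computation $\GG=-\tilde{\bm A}$ via Lemma~10 of the \cite{FGH:19} supplement and the Moore--Penrose inversion on $\bma_0^\perp$.
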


This corresponds to Theorem 6 in \cite{FGH:19} and Theorem 5 in \cite{arun_rohit:20}), but note that the formulation of Theorem 5 in \cite{arun_rohit:20} still contains  the Jacobian connected with the lower dimensional parameterization. Consequently, the ESE and the cubic spline estimator admit the same weak limit under the conditions stated above.

\section{Simulation and comparisons with other estimators}
\label{section:simulations}
\setcounter{equation}{0}

In this section we compare the LSE with the Simple Score Estimator (SSE), the Efficient Score Estimator (ESE), the Effective Dimension Reduction (EDR) estimate, the spline estimate, the MAVE estimate and the EFM estimate. We take part of the simulation settings in \cite{BDJ:19}, which means that we take the dimension $d$ equal to $2$. Since the parameter belongs to the boundary of a circle in this case, we only have to determine a $1$-dimensional parameter. Using this fact, we use the parameterization $\bma=(\a_1,\a_2)=(cos(\b),\sin(\b))$ and determine the angle $\b$ by a golden section search for the SSE, ESE and spline estimate.  For the EDR we used the {\tt R} package {\tt edr}; the method is discussed in \cite{hristache01}. The spline method is described in \cite{arun_rohit:20}, and there exists an {\tt R} package {\tt simest} for it, but we used our own implementation.
For the MAVE method we used the {\tt R} package {\tt MAVE}, for theory see \cite{xia:06}. For the EFM estimate (see \cite{cui2011}) we used an {\tt R} script, due to Xia Cui and kindly provided to us by her and Rohit Patra. All runs of our simulations can be reproduced by running the {\tt R} scripts in \cite{github:18}.

\begin{figure}[!h]
	\centering
	\begin{subfigure}{0.45\linewidth}
	\includegraphics[width=0.95\textwidth]{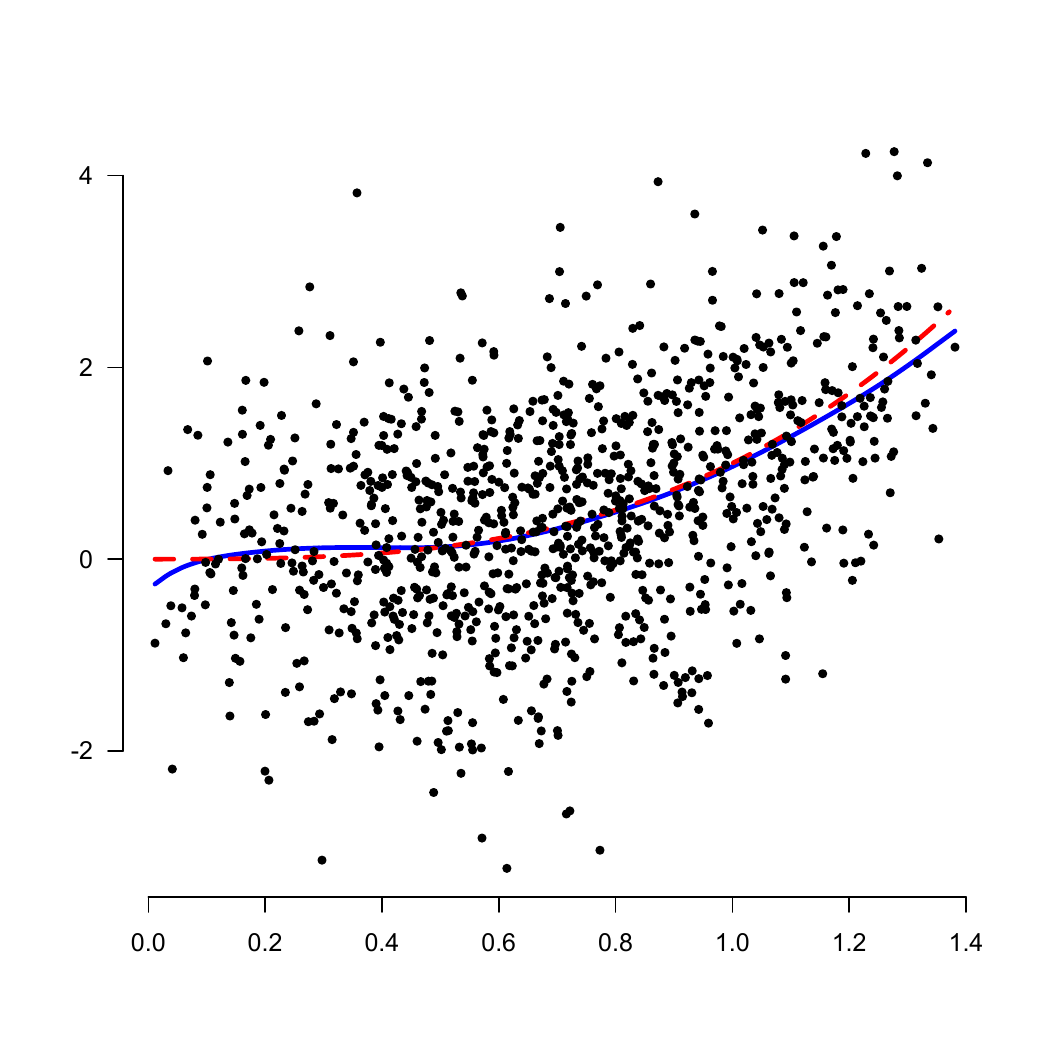}
	\caption{$\hat\bma_n=(0.71055,0.70364)$}
	\end{subfigure}
	\begin{subfigure}{0.45\linewidth}
	\includegraphics[width=0.95\textwidth]{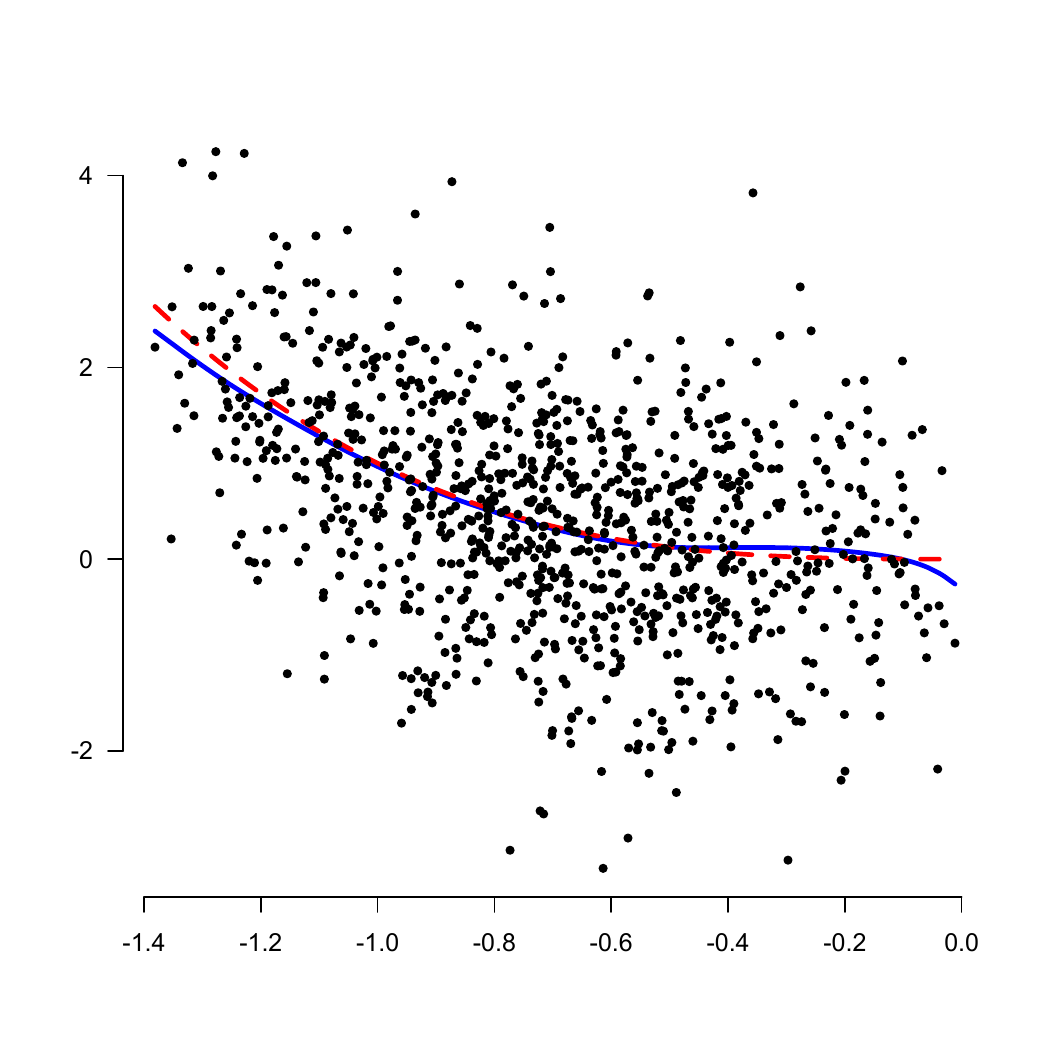}
	\caption{$\hat\bma_n=-(0.71055,0.70364)$}
\end{subfigure}\\
\caption{Two MAVE estimates of $\bma_0=2^{-1/2}(1,1)^T$ for model 1 with sample size $n=1000$: (a) from starting the iterations at $\bma_0$, (b) from starting the iterations at $-\bma_0$; the blue solid curve is the estimate of the link function, based on $\hat\bma_n$; the blue dashed function is $t\mapsto t^3$ in (a) and $t\mapsto -t^3$ in (b). Note that in (b) also the sign of the first coordinates of the points $(\hat\bma_n^T\bmX_i,Y_i)$ in the scatterplot is reversed. Under the restriction that the link function is nondecreasing (b) cannot be a solution.}
\label{fig:Mave_example}
	\end{figure}

In simulation model 1 we take $\bma_0=(1/\sqrt{2},1/\sqrt{2})^T$ and $\bmX=(X_1,X_2)^T$, where $X_1$ and $X_2$ are independent Uniform$(0,1)$ variables. The model is now:
\begin{align*}
Y=\psi_0(\bma_0^T\bmX)+\e,
\end{align*}
where $\psi_0(u)=u^3$ and $\e$ is a standard normal random variable, independent of $\bmX$.

In simulation model 2 we also take $\bma_0=(1/\sqrt{2},1/\sqrt{2})^T$ and $\bmX=(X_1,X_2)^T$, where $X_1$ and $X_2$ are independent Uniform$(0,1)$ variables.  This time, however, the model is:
\begin{align*}
Y=\text{Bin}\left(10,\exp(\bma_0^T\bmX)/\left\{1+\exp(\bma_0^T\bmX)\right\}\right),
\end{align*}
see also Table 2 in \cite{BDJ:19}.
This means:
\begin{align*}
Y=\psi_0(\bma_0^T\bmX)+\e,
\end{align*}
where
\begin{align*}
\psi_0(\bma_0^T\bmX)=10\,\exp(\bma_0^T\bmX)/\{1+\exp(\bma_0^T\bmX)\},\qquad \e=N_n-\psi_0(\bma_0^T\bmX),
\end{align*}
and
\begin{align*}
N_n=\text{Bin}\left(10,\frac{\exp(\bma_0^T\bmX}{1+\exp(\bma_0^T\bmX)}\right).
\end{align*}
Note that indeed $\E\{\e|\bmX)=0$,
but that we do not have independence of $\e$ and $\bmX$, as in the previous example.

It was noticed in \cite{xia:06}, p.\ 1113, that, although it was shown in \cite{hristache01} that the $\sqrt{n}$ rate of convergence for the estimation of $\bma_0$ can be achieved, the asymptotic distribution of the method proposed in \cite{hristache01} was not derived, which makes it difficult to compare the limiting efficiency of the estimation method with other methods. In \cite{xia:06} the asymptotic distribution of the rMAVE estimate is derived (see Theorem 4.2 of \cite{xia:06}), which shows that this limit distribution is actually the same as that of the ESE and the spline estimate. Since Xia is one of the authors of the recent {\tt MAVE} {\tt R} package, we assume that the rMAVE method has been implemented in this package, so we will identify MAVE with rMAVE in the sequel.

The proof of the asymptotic normality result for the MAVE method uses the fact that the iteration steps, described on p.1117 of \cite{xia:06}, start in a neighborhood $\{\bma:\|\bma-\bma_0\|\le C n^{-1/2+c_0}\}$ of $\bma_0$, where $C>0$ and $c_0<1/20$, and indeed our original experiments with the {\tt R} package showed many outliers, probably due to starting values not sufficiently close to $\bma_0$. A further investigation revealed that there were many solutions in the neighborhood of the points $-\bma_0$. This phenomenon is illustrated in Figure \ref{fig:Mave_example}, generated by our own implementation of the algorithm in \cite{xia:06}. The link function is constructed from the values $a_j^{\hat\bma_n}$ in the algorithm in \cite{xia:06}, p.\ 1117, where the ordered values of $\hat\bma_n^T\bmX_j$ are the first coordinates.

Because of the difficulty we just discussed, we reversed in the results of the {\tt MAVE} {\tt R} package the sign of the solutions in the neighborhood of $-\bma_0$. By the parameterization with a positive first coordinate in \cite{cui2011} situation (b) in Figure \ref{fig:Mave_example} cannot occur for the EFM algorithm. We also tried a modification of the same type as our modification of the MAVE algorithm for the EDR algorithm, but this did not lead to a similar improvement of the results.

It follows from Theorem \ref{th:asympt_normality_SSE} that the variance of the asymptotic normal distribution for the SSE is equal to $2.727482$ and from Theorem \ref{theorem:asymptotics-efficient} that the variance of the asymptotic normal distribution for the ESE and spline estimator equals 2.737200. We already noticed in Section \ref{sec:ESE_PLSE} that the present models is not homoscedastic. In this case the asymptotic covariance matrix for the SSE of Theorem \ref{th:asympt_normality_SSE} is in fact given by
$\bmA^-=\bmA^-\bm\Sigma\bmA^-$.

\begin{figure}
	\centering
	\begin{subfigure}{0.4\linewidth}
		\includegraphics[width=1.0\textwidth]{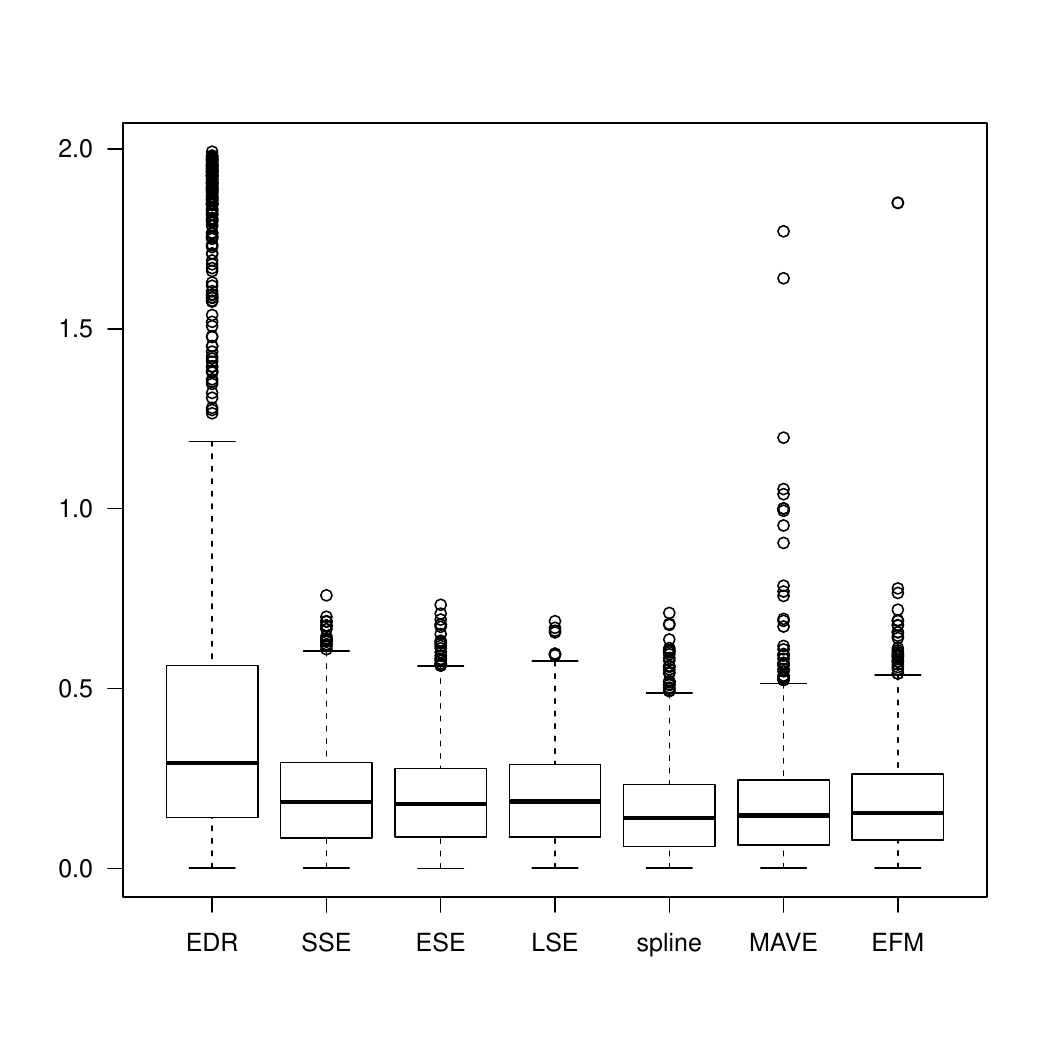}
		\caption{$n = 100$}
	\end{subfigure}
	\begin{subfigure}{0.4\linewidth}
	\includegraphics[width=1.0\textwidth]{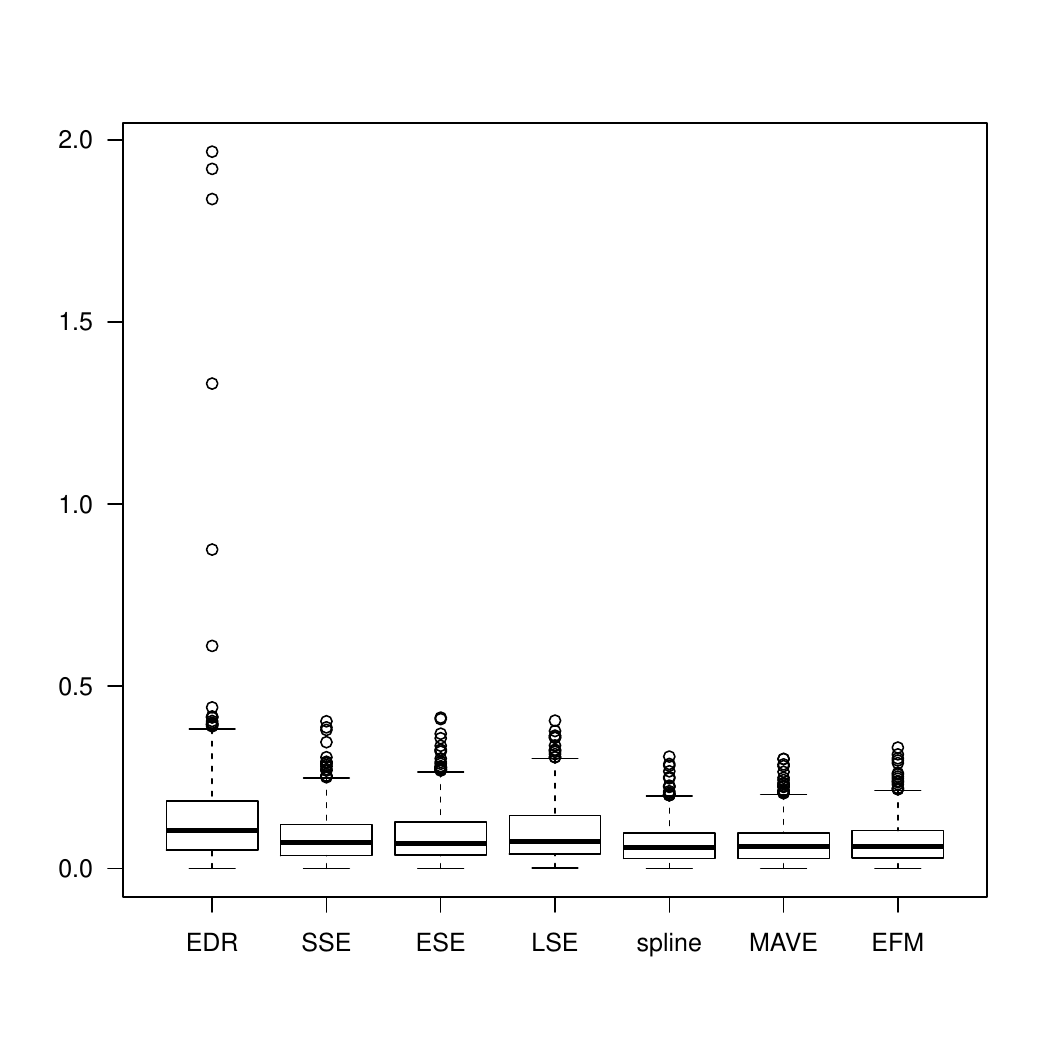}
	\caption{$n = 500$}
\end{subfigure}\\
	\begin{subfigure}{0.4\linewidth}
		\includegraphics[width=1.0\textwidth]{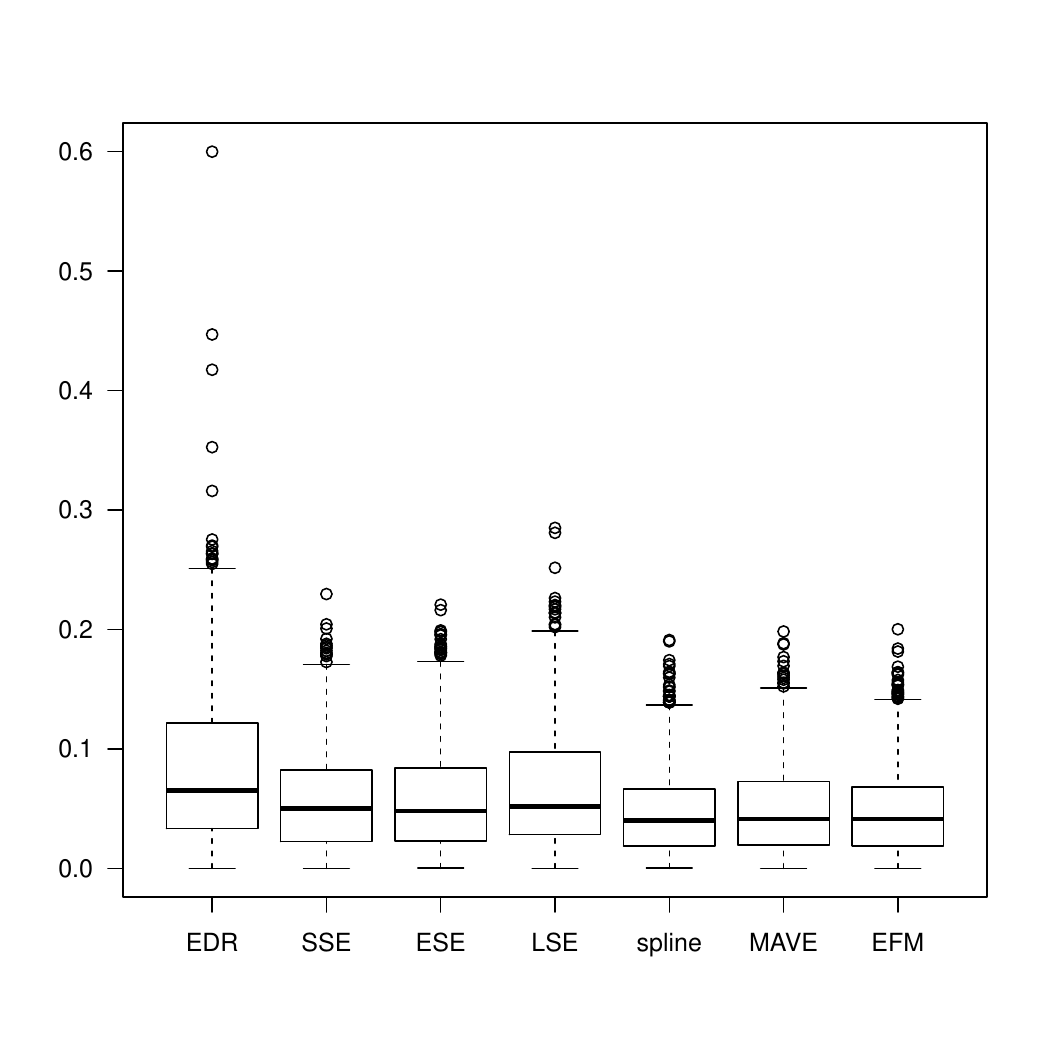}
		\caption{$n=1000$}
	\end{subfigure}
	\begin{subfigure}{0.4\linewidth}
	\includegraphics[width=1.0\textwidth]{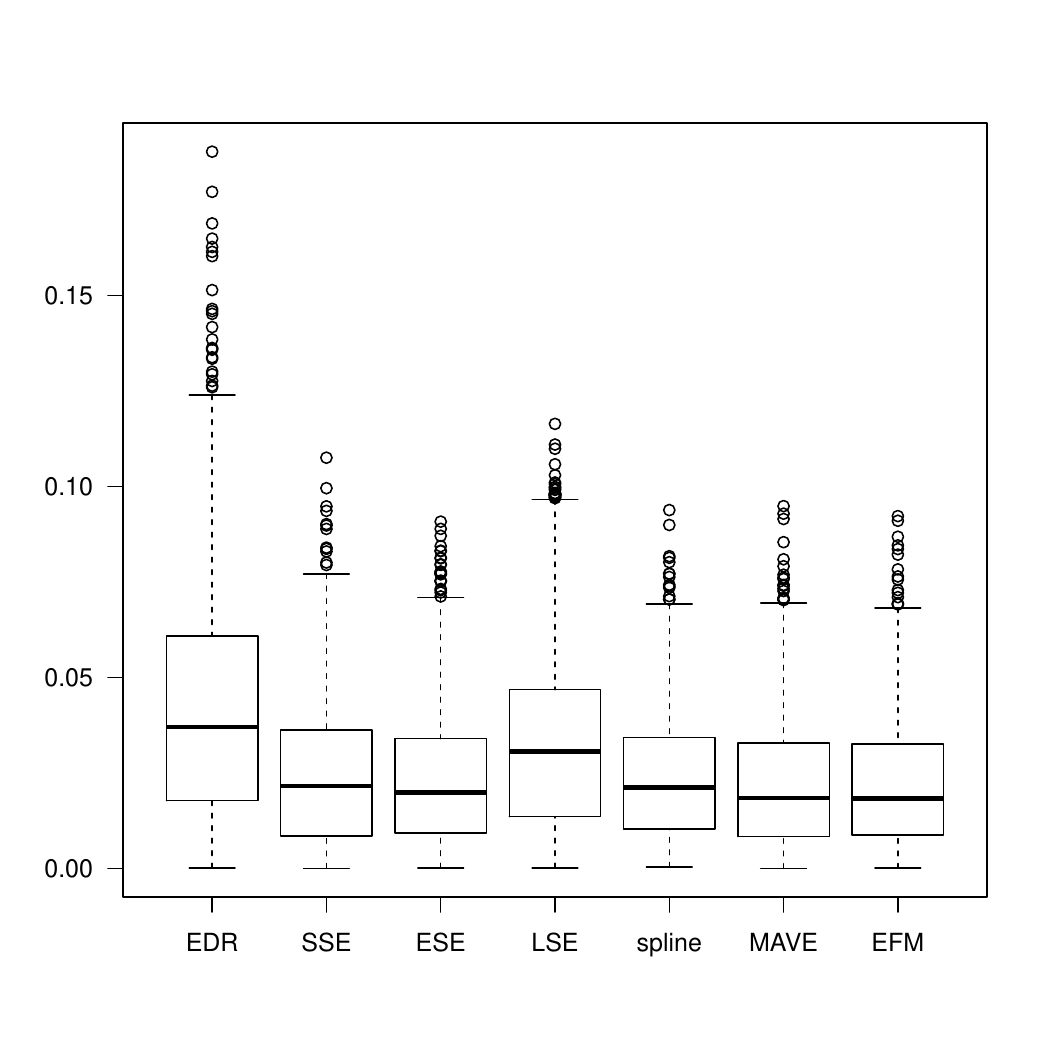}
	\caption{$n = 5000$}
	\end{subfigure}
	\caption{Boxplots of $\sqrt{n/2}\,\|\hat\bma_n-\bma_0\|_2$ for model 1.  In (b) and  (c) the values of EDR were truncated at $0.6$ to show more clearly the differences between the other estimates.}
	\end{figure}

\begin{table}
	\label{table:simulation1}
	\caption{Simulation, model 1; $\e_i$ is standard normal and independent of $\bmX_i$, consisting of two independent Uniform$(0,1)$ random variables.  The mean value $\hat\mu_i$ =  mean($\hat \a_{in}),\, i=1,2$ and $n$ times the variance-covariance $\hat\sigma_{ij}$ $=n\cdot$cov$(\hat\a_{in},\hat\a_{jn}),\,i,\,j=1,2,$\\ of the Efficient Dimension Reduction Estimate EDR, computed by the {\tt R} package {\tt edr}, the Least Squares Estimate (LSE), the Simple Score Estimate (SSE), the Efficient Score Estimate (ESE), the spline estimate, the MAVE estimate and the EFM estimate for different sample sizes $n$. The line, preceded by $\infty$, gives the asymptotic values (unknown for EDR and LSE). The values are based on $1000$ replications.}
	\vspace{0.5cm}
	\scalebox{1.0}{
		\begin{tabular}{|lr|cc|rrr |}
			\hline
			Method&$n$ & $\hat\mu_1$ & $\hat\mu_2$  & $\hat \sigma_{11}$& $\hat\sigma_{22}$& $\hat\sigma_{12}$\\						\hline
			&&&&&&\\
			EDR&100 & 0.621877 & 0.361894 &11.409222&36.869184& 9.152389  \\
			&500 & 0.701217 & 0.686094 & 7.334756 & 11.468453 & -3.881349 \\
			&1000 & 0.701669 & 0.702244  & 6.437653 & 8.090771& -3.552562 \\
			&5000& 0.706021 & 0.706798  &7.344431& 7.276717 & -7.288047\\
			\hline
			&$\infty$&0.707107 & 0.707107  & ? & ? & ?\\
		\hline
		&&&&&&\\
			LSE&100 & 0.672698 & 0.697350  &3.148912& 2.975246& -2.915427  \\
			&500 & 0.702163 & 0.701718  & 3.620960 & 3.665710 & -3.588491 \\
			&1000 & 0.704706 &0.704320  & 3.665561 & 3.664711 & -3.637541  \\
			&5000& 0.707262  & 0.705690  & 4.435842 & 4.485168 & -4.453713\\
			\hline
			&$\infty$&0.707107 & 0.707107  & ? & ? & ?\\
		\hline
		&&&&&&\\
			SSE&100 & 0.673997 & 0.6919403  &3.338637& 3.362656& -3.141408  \\
			&500 & 0.699986 & 0.706198  & 2.849647 & 2.807978 & -2.793798\\
			&1000 & 0.706477 & 0.704191  & 2.501106 & 2.510047 & -2.494237  \\
			&5000& 0.707090 & 0.706423  & 2.473765 & 2.485884 & -2.477371 \\
			\hline
			&$\infty$&0.707107 & 0.707107  & 2.343750 & 2.343750 & -2.343750\\
		\hline
		&&&&&&\\
			ESE&100 & 0.682781 & 0.687949  &3.067802& 2.991976& -2.855176  \\
			&500 & 0.702940 & 0.702462  & 3.100843& 3.116337 & -3.064151 \\
			&1000 & 0.704055 & 0.706387  & 2.676388 & 2.653164 & -2.650667  \\
			&5000& 0.707130 & 0.706444  & 2.257541 & 2.265547 & -2.259443 \\
			\hline
			&$\infty$&0.707107 & 0.707107  & 1.885522 & 1.885522 & -1.885522\\
		\hline
			&&&&&&\\
			spline&100 & 0.690741 & 0.705485  &1.801235& 1.762567& -1.711552  \\
			&500 & 0.703670 & 0.702640  & 1.795384 & 1.778454 & -1.773560 \\
			&1000 & 0.705684 &0.706007  & 1.786589 & 1.781797 & -1.777691  \\
			&5000&  0.706404 & 0.707193  & 2.180466 & 2.181544 & -2.179269 \\
			\hline
			&$\infty$&0.707107 &  0.707165  & 1.885522 & 1.885522 & -1.885522\\
		\hline
			&&&&&&\\
			MAVE&100 & 0.686503 & 0.684887  &2.423618& 3.546768& -2.245708  \\
			&500 & 0.703333 & 0.705537  & 1.897806 & 1.876220 & -2.040677 \\
			&1000 & 0.705840 &0.705660  & 1.929966 & 1.907128 & -1.911452  \\
			&5000& 0.707328 &0.706299  & 2.071168 & 2.082169 & -2.074914 \\
			\hline
			&$\infty$&0.707107 & 0.707107  & 1.885522 & 1.885522 & -1.885522\\
		\hline
			&&&&&&\\
			EFM&100 & 0.686292 & 0.684274  &2.802308& 3.280956& -2.312445  \\
			&500 & 0.703236 & 0.705133  & 2.082162 & 2.045150 & -2.044960 \\
			&1000 & 0.705629 & 0.705950  & 1.866486 & 1.860184 & -1.856340  \\
			&5000& 0.707269 & 0.707251  & 1.953800 & 1.964081 & -1.957351 \\
			\hline
			&$\infty$&0.707107 & 0.707107  & 1.885522 & 1.885522 & -1.885522\\
		\hline
		\end{tabular}}
	\end{table}

	\begin{figure}
	\centering
	\begin{subfigure}{0.4\linewidth}
		\includegraphics[width=1.0\textwidth]{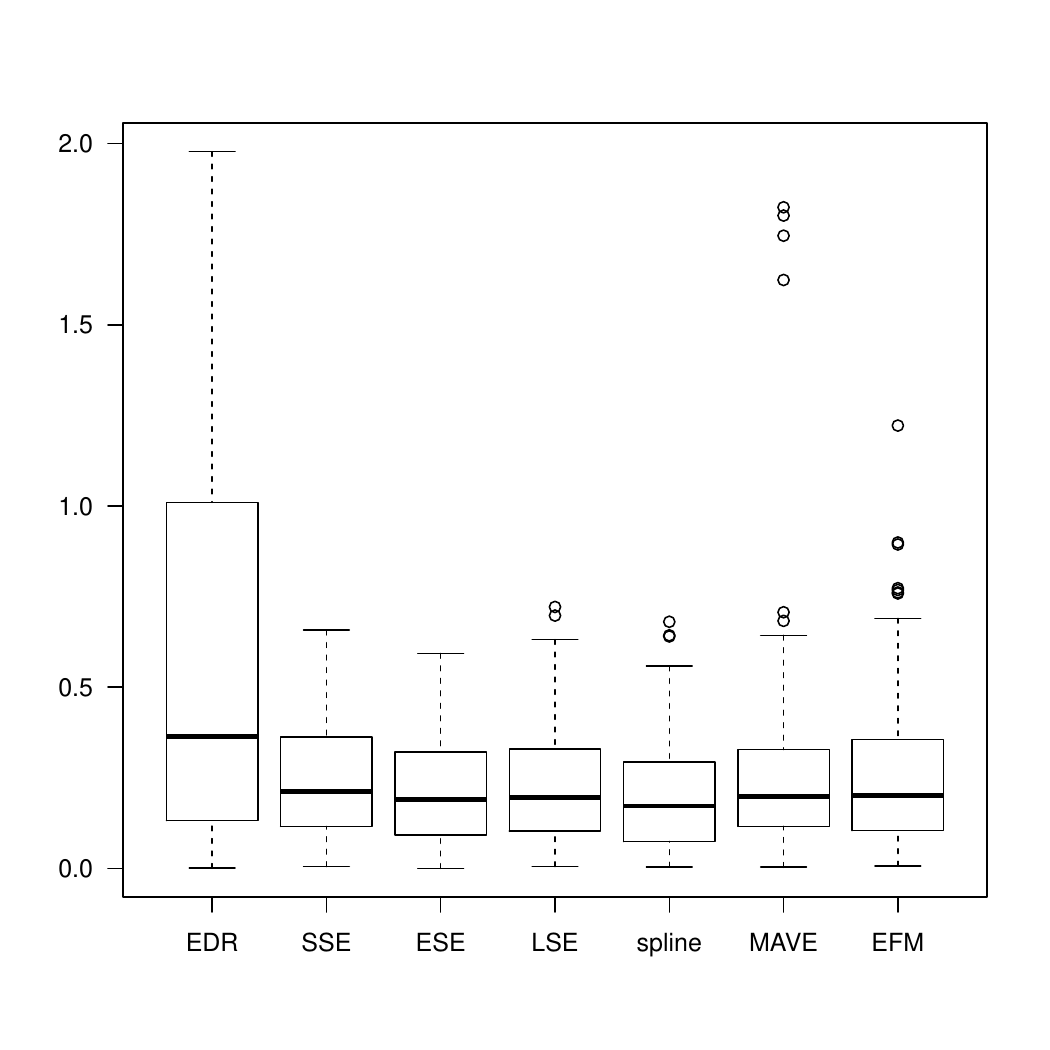}
		\caption{$n = 100$}
	\end{subfigure}
	\begin{subfigure}{0.4\linewidth}
	\includegraphics[width=1.0\textwidth]{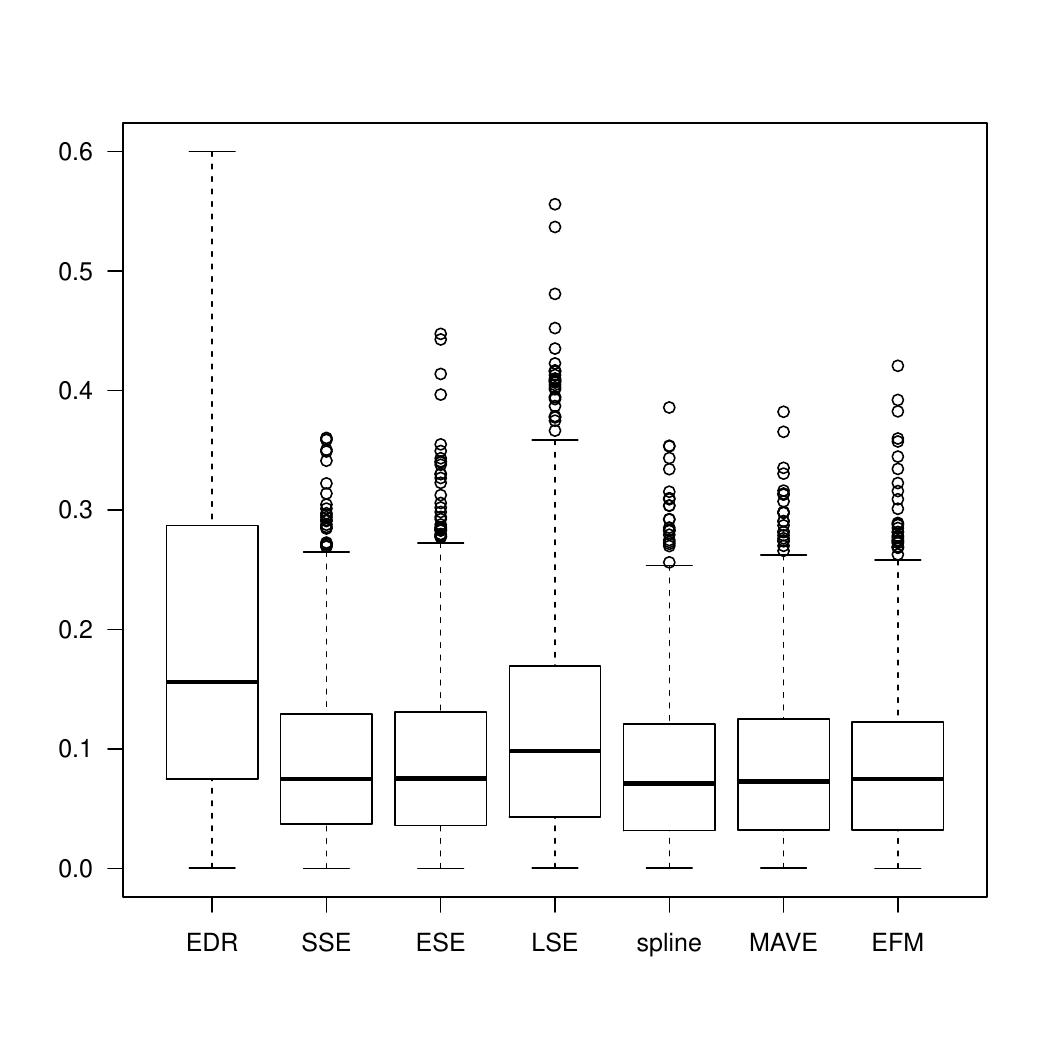}
	\caption{$n = 500$}
\end{subfigure}\\
	\begin{subfigure}{0.4\linewidth}
		\includegraphics[width=1.0\textwidth]{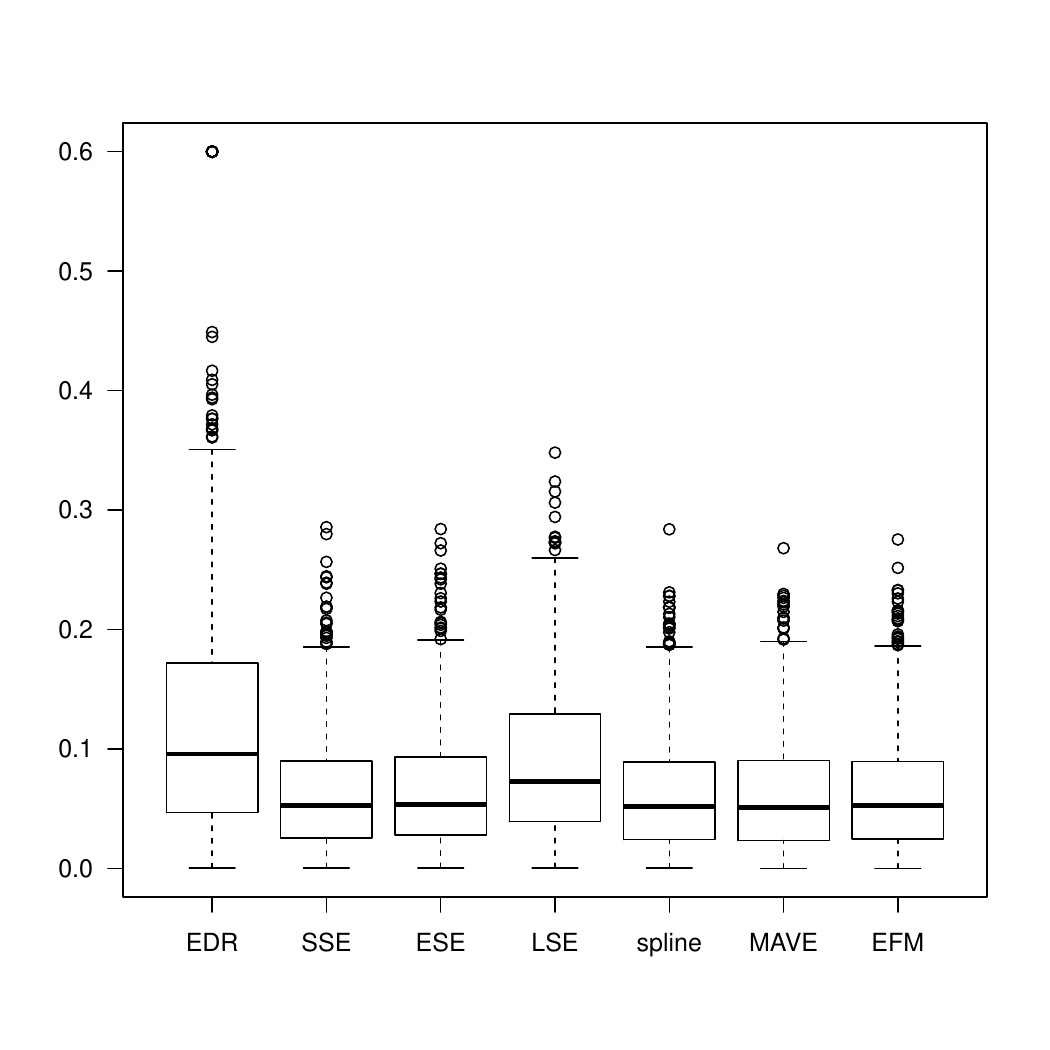}
		\caption{$n=1000$}
	\end{subfigure}
	\begin{subfigure}{0.4\linewidth}
	\includegraphics[width=1.0\textwidth]{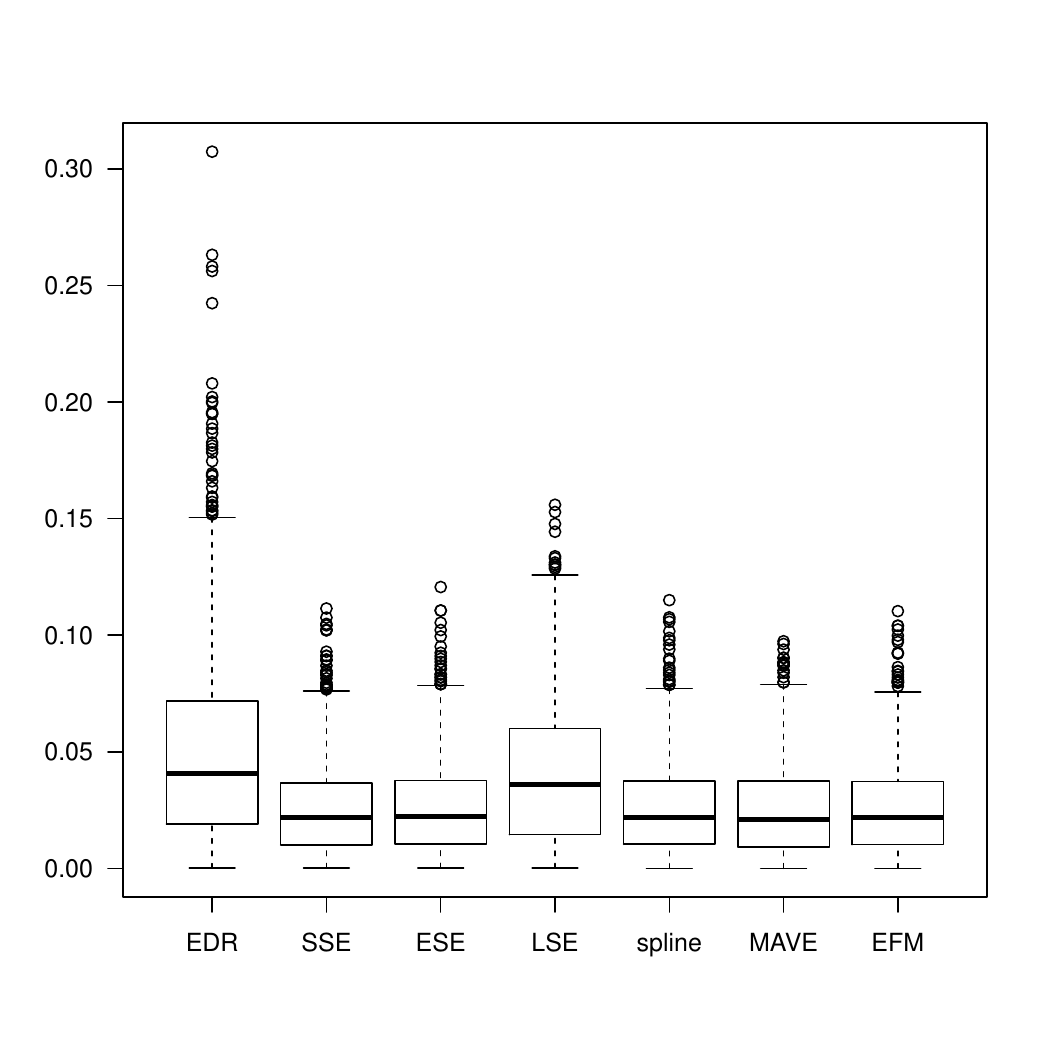}
	\caption{$n = 5000$}
	\end{subfigure}
	\caption{Boxplots of $\sqrt{n/2}\,\|\hat\bma_n-\bma_0\|_2$ for model 2.  In (b) and  (c) the values of EDR were truncated at $0.6$ to show more clearly the differences between the other estimates.}

\end{figure}

\begin{table}
	\label{table:simulation2}
	\caption{Simulation, model 2; $Y_i\sim\text{Bin}\left(10,\exp(\bma_0^T\bmX_i)/\left\{1+\exp(\bma_0^T\bmX_i)\right\}\right)$, where $\bmX_i$ consists of two independent Uniform$(0,1)$ random variables.  The mean value $\hat\mu_i =  \text{mean}(\hat \a_{in}),\, i=1,2$ and $n$ times the variance-covariance $n\text{cov}(\hat\a_{in},\hat\a_{jn}),\,i,\,j=1,2,$ of the Efficient Dimension Reduction Estimate EDR, computed by the {\tt R} package {\tt edr}, the Least Squares Estimate (LSE), the Simple Score Estimate (SSE), the Efficient Score Estimate (ESE), the spline estimate,  the MAVE estimate and the EFM estimate for different sample sizes $n$. The line, preceded by $\infty$, gives the asymptotic values (unknown for EDR and LSE). The values are based on $1000$ replications.}
	\vspace{0.5cm}
	\scalebox{1.0}{
		\begin{tabular}{|lr|cc|ccc |}
			\hline
			Method&$n$ & $\hat\mu_1$ & $\hat\mu_2$  & $\hat \sigma_{11}$& $\hat\sigma_{22}$& $\hat\sigma_{12}$\\
			\hline
			&&&&&&\\
			EDR&100 & 0.587264 & 0.202005  &13.33724&48.15572& 11.87625  \\
			&500 & 0.670702 & 0.602469 & 26.76111 & 66.92737 & 14.09701 \\
			&1000 & 0.696075 & 0.666591  & 21.89080 & 49.31544& 9.345753 \\
			&5000& 0.704424 & 0.706604 & 11.39598 & 11.11493 & -11.17376\\
			\hline
			&$\infty$&0.707107 & 0.707107  & ? & ? & ?\\
		\hline
		&&&&&&\\
			LSE&100 & 0.658631 & 0.699725  &4.069966& 3.596783& -3.609490  \\
			&500 &0.695541 & 0.703007  & 5.650618 & 5.362877 & -5.358190 \\
			&1000 &0.704497 & 0.701243 & 5.909494 & 6.043808 & -5.911246  \\
			&5000& 0.704805 & 0.707621  & 6.303320 & 6.321866 & -6.298515\\
			\hline
			&$\infty$&0.707107 & 0.707107  & ? & ? & ?\\
		\hline
		&&&&&&\\
			SSE&100 & 0.667908 & 0.694376 &3.760921& 3.420387&-3.356968  \\
			&500 & 0.698498 & 0.706423 & 3.358458 & 3.182044 & -3.223734\\
			&1000 & 0.707276 & 0.702390  & 3.179623 & 3.236283 & -3.184724  \\
			&5000& 0.706162 & 0.707286  & 2.718742 & 2.707549 & -2.709870 \\
			\hline
			&$\infty$&0.707107 & 0.707107  & 2.727482 &2.727482 & -2.727482\\
		\hline
		&&&&&&\\
			ESE&100 & 0.684804 & 0.688063  &2.892165& 2.874755& -2.744223  \\
			&500 & 0.698078& 0.706159  & 3.562625 & 3.457337 & -3.446605 \\
			&1000 & 0.707879 & 0.701445  & 3.420159& 3.470217 & -3.418606  \\
			&5000& 0.706321 & 0.707110  & 2.775092 & 2.760287 &-2.764230 \\
			\hline
			&$\infty$&0.707107 & 0.707107  & 2.737200 & 2.737200 & -2.737200\\
		\hline
			&&&&&&\\
			spline&100 &0.677287& 0.695301  &3.009781&2.779876 & -2.714928  \\
			&500 & 0.699117 & 0.706946  & 2.952928 & 2.784383 & -2.830415 \\
			&1000 &0.707890 & 0.702001  & 3.027712 & 3.064772 & -3.026082  \\
			&5000& 0.706200& 0.707312  & 2.764447 & 2.762986 & -2.760530 \\
			\hline
			&$\infty$&0.707107 & 0.707232  & 2.737200 & 2.737200 & -2.737200\\
		\hline
			&&&&&&\\
			MAVE&100 & 0.667849 & 0.654361  &3.891510& 8.700093& -2.325804  \\
			&500 & 0.699108 & 0.706377  & 3.155191 & 2.990569 & -3.031249 \\
			&1000 & 0.707520 & 0.702341  & 3.040201 & 3.097965 & -3.049075  \\
			&5000& 0.707657& 0.705827  & 2.572343 & 2.573418 & -2.570275 \\
			\hline
			&$\infty$&0.707107 & 0.707107  & 2.737200 & 2.737200 & -2.737200\\
		\hline
			&&&&&&\\
			EFM&100 & 0.663227 & 0.666070  &5.681573& 5.978194& -2.503058  \\
			&500 &0.698920 & 0.706295  & 3.279110 & 3.055940 & -3.118757 \\
			&1000 &0.707878 & 0.706275  & 3.102414 & 3.157143 &-3.108516  \\
			&5000& 0.706043&0.701894  & 2.669352 & 2.650343 & -2.656742 \\
			\hline
			&$\infty$&0.707107 & 0.707107  & 2.737200 & 2.737200 & -2.737200\\
		\hline

		\end{tabular}}
	\end{table}

It is clear that the  estimate EDR is inferior to the other methods for these models; even the LSE for which we do not know the rate of convergence has a better performance. In \cite{hristache01} it is assumed that the errors have a normal distribution, but also in model 1, where this condition is satisfied, the behavior is clearly inferior, in particular for the lower sample sizes.

\section{Concluding remarks}
We replaced the ``crossing of zero'' estimators in \cite{FGH:19} by profile least squares estimators. The asymptotic distribution of the estimators was determined and its behavior illustrated by a simulation study, using the same models
 as in \cite{BDJ:19}.
 
 In the first model the error is independent of the covariate and homoscedastic and in this case four of the estimators were efficient. In the other (binomial-logistic) model the error was dependent on the covariates and not homoscedastic. It was shown that the SSE (Simple Score Estimate) had in fact a smaller asymptotic variance in this model than the other estimators for which the asymptotic variance is known, although the difference is very small and does not really show up in the simulations.
 
There is no uniformly best estimate in our simulation, but the EDR estimate is clearly inferior to the other estimates, inluding the LSE, in particular for the lower sample sizes. On the other hand, the LSE is inferior to the other estimators except the EDR. All simulation results can be reproduced by running the {\tt R} scripts in \cite{github:18}.

\section*{Acknowledgement}
\label{section:acknowledgement}
We thank Vladimir Spokoiny for helpful discussions during the Oberwolfach meeting ``Statistics meets Machine Learning'', January 26 - February 1, 2020.        

\bibliographystyle{plainnat}
\bibliography{cupbook}

\end{document}